\numberwithin{equation}{section}
\newtheorem{theorem}[subsection]{Theorem}
\newtheorem{lemma}[subsection]{Lemma}
\newtheorem{proposition}[subsection]{Proposition}
\newtheorem{corollary}[subsection]{Corollary}
\theoremstyle{definition}
\newtheorem{remark}[subsection]{Remark}
\newtheorem{definition}[subsection]{Definition}
\newtheorem{ex}[subsection]{Example}
\theoremstyle{property}
\newcommand{\vast}{\bBigg@{2}}
\newcommand{\Vast}{\bBigg@{3}}
\begin{document}

\title{New Hermite-Hadamard and Simpson Type Inequalities For Harmonically $(s,m)$-convex functoins in Second Sense}
\author{Imran Abbas Baloch, $\dot{I}$mdat $\dot{I}$scan}
\address{Imran Abbas Baloch\\Abdus Salam School of Mathematical Sciences\\
GC University, Lahore, Pakistan}\email{iabbasbaloch@gmail.com\\ iabbasbaloch@sms.edu.pk }

\address{$\dot{I}$mdat Iscan\\ Department of Mathematics, Faculty of Arts and Sciences\\
 Giresun University, 28200, Giresun, TURKEY}
\email{imdat.iscan@giresun.edu.tr}

 \subjclass[2010]{Primary: 26D15. Secondary: 26A51}

 \keywords{Harmonically $(s,m$)-convex function, Hermite-Hadamard type inequalities, Simpson type inequalities}

\begin{abstract} 
In \cite{II}, authors introduced the concept of harmonically $(s,m)$-convex functions in second sense which unifies different type of convexities and is more general notion of Harmonic convexity. In this paper, authors obtain new estimates on generalization of Hermite-Hadamard and Simpson type inequalities for this larger class of functions.
\end{abstract}

\maketitle
\section{\bf{Introduction}}
Let $f:I \subset \mathbb{R} \rightarrow \mathbb{R}$ be a convex function defined on the interval $I$ and $ a,b \in I$ with $a < b$, then following double inequalities hold
\begin{equation} \label{IQ1}
f\vast(  \frac{a + b}{2} \vast) \leq \frac{1}{b - a} \int_{a}^{b}f(x)dx \leq \frac{f(a) + f(b)}{2}.
\end{equation}
The inequality (\ref{IQ1}) is known in the literature as Hermite-Hadamard integral inequality.\\
 Let $f:[a,b] \rightarrow \mathbb{R}$ be a four times differentiable mapping on $(a,b)$ and $\|f^{(4)}\|_{\infty} = \sup_{x \in (a,b)} |f^{(4)}(x)| < \infty$, then the following inequality holds
\begin{equation}\label{IQ2}
\vast|\frac{1}{3} \vast[\frac{f(a) + f(b)}{2} + 2 f\vast(  \frac{a + b}{2} \vast) \vast] - \frac{1}{b - a} \int_{a}^{b}f(x)dx \vast| \leq \frac{1}{2880} \|f^{(4)}\|_{\infty} (b - a)^{4}.
\end{equation}
The inequality (\ref{IQ2}) is known in the literature as Simpson inequality. In recent years, many authors have studied errors estimates for Hermite-Hadamard and Simpson inequalities; for refinements, counterparts, generalization see [2,4,6,7,9,10].\\
In \cite{II}, authors introduced the concept of harmonically $(s,m)$-convex functions as follow
\begin{definition}
The function $f: I \subset (0, \infty) \rightarrow \mathbb{R}$ is said to be harmonically $(s,m)$-convex in second sense, where $s \in (0,1]$ and $m \in (0,1]$ if
$$f \big(\frac{mxy}{mty + (1 - t)x}\big) = f \big( (\frac{t}{x} + \frac{1 - t}{my})^{-1} \big) \leq t^{s} f(x) + m (1 - t)^{s} f(y)$$
$\forall x, y \in I$ and $t \in [0,1]$.
\end{definition}
\begin{remark}\label{IR1}
Note that for $s = 1$, harmonic $(s,m)$-convexity reduces to harmonic $m$-convexity and for $m = 1$, harmonic $(s,m)$-convexity reduces to harmonic $s$-convexity in second sense (see [5]) and for $s,m = 1$, harmonically $(s,m)$-convexity reduces to ordinary  harmonic convexity (see [4]).
\end{remark}
\section{\textbf{Some Basic Properties}}
In this section, we explore some basic results associated with harmonically $(s,m)$-convex functions in second sense.

\begin{proposition}\label{PP1}
Let $f:(0,\infty) \rightarrow \mathbb{R}$ be a function\\
a)  if $f$ is $(s,m)$-convex function in second sense and non-decreasing, then$f$ is harmonically $(s,m)$-convex function in second sense.\\
b)  if $f$ is harmonically $(s,m)$-convex function in second sense and non-increasing, then $f$ is $(s,m)$-convex function in second sense.\\
\end{proposition}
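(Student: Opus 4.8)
The plan is to reduce both implications to a single elementary estimate between weighted means, applied to the pair of positive numbers $x$ and $my$, and then to insert the monotonicity hypothesis in the direction appropriate to each case. Concretely, recall first that $f$ is $(s,m)$-convex in the second sense if $f(tx+m(1-t)y)\le t^{s}f(x)+m(1-t)^{s}f(y)$ for all $x,y$ in the domain and $t\in[0,1]$. The key observation is that for $x,y\in(0,\infty)$, $m\in(0,1]$ and $t\in[0,1]$,
\[
\left(\frac{t}{x}+\frac{1-t}{my}\right)^{-1}\le tx+m(1-t)y ,
\]
which is just the weighted harmonic--arithmetic mean inequality for $x$ and $my$ with weights $t,1-t$; after clearing denominators it reduces to $\frac{x}{my}+\frac{my}{x}\ge 2$, i.e. to $(x-my)^{2}\ge 0$. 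Since $m\le 1$ and $x,y>0$, the point $tx+m(1-t)y$ lies in $(0,\infty)$, so every value of $f$ appearing below is legitimate.

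For part (a), assume $f$ is $(s,m)$-convex in the second sense and non-decreasing. Then for any $x,y\in(0,\infty)$ and $t\in[0,1]$,
\[
f\!\left(\left(\frac{t}{x}+\frac{1-t}{my}\right)^{-1}\right)\le f\big(tx+m(1-t)y\big)\le t^{s}f(x)+m(1-t)^{s}f(y),
\]
where the first step combines the mean inequality above with monotonicity of $f$, and the second is $(s,m)$-convexity. Comparing with the definition of harmonically $(s,m)$-convexity in the second sense, this is exactly the desired conclusion.

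For part (b), assume $f$ is harmonically $(s,m)$-convex in the second sense and non-increasing. Now monotonicity of $f$ reverses the first inequality of the previous chain: since $\left(\frac{t}{x}+\frac{1-t}{my}\right)^{-1}\le tx+m(1-t)y$,
\[
f\big(tx+m(1-t)y\big)\le f\!\left(\left(\frac{t}{x}+\frac{1-t}{my}\right)^{-1}\right)\le t^{s}f(x)+m(1-t)^{s}f(y),
\]
the second inequality being harmonic $(s,m)$-convexity applied to the same $x,y,t$. This is precisely $(s,m)$-convexity of $f$ in the second sense.

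There is essentially no hard step here: the entire content is the recognition that the harmonic-type argument of $f$ is always dominated by the affine-type argument $tx+m(1-t)y$, i.e. the twisted AM--HM inequality above, and that the extra factor $m$ does not spoil it because it is simply absorbed into the variable $y$. The only thing that requires attention is the bookkeeping — pairing ``non-decreasing'' with part (a) and ``non-increasing'' with part (b) so that the monotonicity step points the correct way — and noting that the argument never leaves $(0,\infty)$, so the domain hypothesis on $f$ is respected throughout.
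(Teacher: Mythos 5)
Your proof is correct and complete: the weighted AM--HM inequality $\left(\frac{t}{x}+\frac{1-t}{my}\right)^{-1}\le tx+m(1-t)y$ (applied to the pair $x$, $my$) combined with monotonicity in the appropriate direction is exactly what is needed, and your reduction of that inequality to $(x-my)^{2}\ge 0$ is sound. Note that the paper itself states Proposition \ref{PP1} without proof; your argument is the standard one used for the analogous statements about harmonically convex and harmonically $s$-convex functions in the cited literature, so there is nothing to correct or to compare beyond that.
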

\begin{remark}
According to proposition \ref{PP1}, every non-decreasing $(s,m)$-convex function in second sense is also harmonically $(s,m)$-convex function in second sense.
\end{remark}
\begin{ex}(see\cite{BK})
Let $0 < s < 1$ and $a, b, c \in \mathbb{R}$, then function $f:(0,\infty) \rightarrow \mathbb{R}$ defined by
$$  f(x) = \left\{
                                                            \begin{array}{ll}
                                                              a, & \hbox{$ x = 0$} \\
                                                              b x^{s} + c, & \hbox{$x > 0$}
                                                            \end{array}
                                                          \right.    $$
is non-decreasing $s$-convex function in second sense for $ b \geq 0$ and $ 0 \leq c \leq a$. Hence, by proposition \ref{PP1}, $f$ is harmonically $(s,1)$-convex function.
\end{ex}

\begin{proposition}
Let $f:  (0, \infty) \rightarrow \mathbb{R}$ be a harmonically $(s,m)$-convex in second sense, where $s,m \in (0, 1]$ and let $a, b$ be nonnegative real numbers with $a < b$. Then for any $x \in [a,b]$, there is $t \in [0,1]$ such that
$$ f\vast( \frac{ab}{a + b - x} \vast) \leq t^{s} [ f(a) + f(b)] + m (1 - t)^{s} [f(\frac{a}{m}) + f(\frac{b}{m}) ]  - f\vast( \frac{ab}{x} \vast). $$
\end{proposition}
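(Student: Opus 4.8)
The plan is to exploit the harmonic $(s,m)$-convexity with a cleverly chosen parametrization. Fix $x \in [a,b]$ and write $u = \tfrac{ab}{a+b-x}$ and $v = \tfrac{ab}{x}$; the point of these substitutions is that $\tfrac1u + \tfrac1v = \tfrac{a+b-x}{ab} + \tfrac{x}{ab} = \tfrac1a + \tfrac1b$, so the pair $\{u,v\}$ has the same ``harmonic sum'' as the pair $\{a,b\}$. Since $x$ ranges over $[a,b]$, both $u$ and $v$ lie in $[a,b]$ as well. The strategy is to represent $u$ as a harmonic combination of $a$ and $b/m$, and separately as a harmonic combination of $b$ and $a/m$, apply the definition of harmonic $(s,m)$-convexity to each, and then combine.

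\smallskip

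More precisely, first I would choose $t \in [0,1]$ so that $\tfrac1u = \tfrac{t}{a} + \tfrac{1-t}{b/m} = \tfrac{t}{a} + \tfrac{m(1-t)}{b}$; solving this linear equation in $t$ is possible because $\tfrac1u \in [\tfrac1b,\tfrac1a]$ and, for $m \in (0,1]$, the interval $[\tfrac{m}{b},\tfrac1a]$ contains $[\tfrac1b,\tfrac1a]$ (using $m/b \le 1/b$), so such a $t$ exists. The definition then gives
\[
f\!\left(\frac{ab}{a+b-x}\right) = f(u) \le t^{s} f(a) + m(1-t)^{s} f\!\left(\frac{b}{m}\right).
\]
The key observation is that with this \emph{same} $t$, the complementary point $v$ satisfies $\tfrac1v = \tfrac1a + \tfrac1b - \tfrac1u = \tfrac{1-t}{a} + \tfrac{t}{b} \cdot$ --- wait, more carefully $\tfrac1v = (\tfrac1a - \tfrac{t}{a}) + (\tfrac1b - \tfrac{m(1-t)}{b})$, which I would massage into the form $\tfrac{1-t}{a}\cdot(\text{something}) + \dots$; if it does not land exactly as a convex harmonic combination of $b$ and $a/m$ with parameter $t$, I would instead run the symmetric construction independently: pick $t' \in [0,1]$ with $\tfrac1v = \tfrac{t'}{b} + \tfrac{m(1-t')}{a}$, giving $f(v) \le (t')^{s} f(b) + m(1-t')^{s} f(a/m)$, and then observe that the natural relation between $x$ and $a+b-x$ forces $t' = t$ (or $t' = 1-t$, which can be absorbed by relabeling). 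Adding the two inequalities and rearranging to isolate $f(ab/(a+b-x)) = f(u)$ on the left while moving $f(ab/x) = f(v)$ to the right yields exactly the claimed bound, provided the coefficients collapse to $t^{s}[f(a)+f(b)] + m(1-t)^{s}[f(a/m)+f(b/m)]$.

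\smallskip

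The main obstacle I anticipate is the bookkeeping that makes a \emph{single} parameter $t$ serve both representations simultaneously: the statement asserts existence of one $t$ that works, so I must verify that the value of $t$ pinned down by the equation $\tfrac1u = \tfrac{t}{a} + \tfrac{m(1-t)}{b}$ is consistent with the equation needed for $v$. I expect this to come down to the algebraic identity $\tfrac1u + \tfrac1v = \tfrac1a + \tfrac1b$ together with a symmetry swap $a \leftrightarrow b$, but I would need to check the endpoint cases ($x=a$ and $x=b$, where $u$ and $v$ degenerate to $a$ and $b$) and confirm that the chosen $t$ stays in $[0,1]$ for every $m \in (0,1]$. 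A secondary, purely cosmetic point is ensuring the $f(a/m)$ and $f(b/m)$ terms appear with the stated symmetric coefficient $m(1-t)^{s}$ rather than split unevenly; this should follow once the two one-sided estimates are added.
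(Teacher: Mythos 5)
Your overall strategy is the paper's: apply the definition twice with one and the same parameter $t$ --- once to the pair $(a,\tfrac{b}{m})$ to bound $f\big(\tfrac{ab}{a+b-x}\big)$, once to the pair $(b,\tfrac{a}{m})$ to bound $f\big(\tfrac{ab}{x}\big)$ --- then add the two estimates and move $f\big(\tfrac{ab}{x}\big)$ to the other side. However, there is a concrete flaw in how you pin down $t$. In the definition the argument of $f$ is $\big(\tfrac{t}{x}+\tfrac{1-t}{my}\big)^{-1}$, so choosing $y=\tfrac{b}{m}$ makes $my=b$ and the representation you must solve is $\tfrac1u=\tfrac{t}{a}+\tfrac{1-t}{b}$, with no $m$ in it. Your equation $\tfrac1u=\tfrac{t}{a}+\tfrac{m(1-t)}{b}$ instead matches the definition only with $y=\tfrac{b}{m^{2}}$, which would produce $f(\tfrac{b}{m^{2}})$ rather than $f(\tfrac{b}{m})$ in the bound; and with that $t$ the consistency $t'=t$ for the point $v=\tfrac{ab}{x}$ --- which you explicitly leave unchecked and which is exactly what the statement's ``there is $t$'' requires --- genuinely fails when $m<1$. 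So as written the argument does not close.

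The repair is short and is precisely what the paper does: define $t$ by $x=ta+(1-t)b$, i.e.\ $t=\tfrac{b-x}{b-a}\in[0,1]$. Then $a+b-x=tb+(1-t)a$, so
\begin{equation*}
\frac{ab}{a+b-x}=\frac{ma\,(b/m)}{mt\,(b/m)+(1-t)a},\qquad
\frac{ab}{x}=\frac{mb\,(a/m)}{mt\,(a/m)+(1-t)b},
\end{equation*}
and the definition applied with $(x,y)=(a,\tfrac{b}{m})$ and with $(x,y)=(b,\tfrac{a}{m})$ gives, for this single $t$,
$f\big(\tfrac{ab}{a+b-x}\big)\le t^{s}f(a)+m(1-t)^{s}f(\tfrac{b}{m})$ and
$f\big(\tfrac{ab}{x}\big)\le t^{s}f(b)+m(1-t)^{s}f(\tfrac{a}{m})$; no solvability or matching argument is needed because both representations are immediate consequences of the one identity $x=ta+(1-t)b$. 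Adding and rearranging yields the claim (the paper phrases this as adding and subtracting $t^{s}f(b)$ and $m(1-t)^{s}f(\tfrac{a}{m})$; note its final ``$=$'' sign should really be ``$\le$'', coming from the second application of the definition). Your endpoint worries ($x=a$, $x=b$) and the worry about $t\in[0,1]$ disappear with this choice of $t$.
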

\begin{proof}
Since, any $x \in [a,b]$ can be represented as $ x  = ta + (1 - t)b$, $t \in [0,1]$, then
\begin{eqnarray*}
f \vast(\frac{ab}{a + b - x}\vast) &=& f \vast(\frac{ab}{a + b - ta - (1 - t)b}\vast)\\
&=& f \vast(\frac{ma(\frac{b}{m})}{ mt(\frac{b}{m}) + (1 - t)a }\vast)\\
&\leq& t^{s} f(a) + m(1 - t)^{s} f(\frac{b}{m})\\
&=& t^{s} f(a) + m(1 - t)^{s} f(\frac{b}{m}) + t^{s} f(b) - t^{s} f(b) + m(1 - t)^{s} f(\frac{a}{m}) - m(1 - t)^{s} f(\frac{a}{m})\\
&=& t^{s} [ f(a) + f(b)] + m (1 - t)^{s} [f(\frac{a}{m}) + f(\frac{b}{m}) ]  - f\vast( \frac{ab}{ta + (1 - t)b} \vast)\\
&=& t^{s} [ f(a) + f(b)] + m (1 - t)^{s} [f(\frac{a}{m}) + f(\frac{b}{m}) ]  - f\vast( \frac{ab}{x} \vast).
\end{eqnarray*}
\end{proof}
\begin{proposition}
Let $f_{i}:  (0, \infty) \rightarrow \mathbb{R}$, $i = 1,...,n$ are harmonically $(s,m)$-convex in second sense, where $s,m \in (0, 1]$, then function given by $f: = \max_{i = 1,...,n} \{f_{i}\}$ is also harmonically $(s,m)$-convex in second sense.
\end{proposition}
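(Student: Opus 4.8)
The plan is to argue pointwise by selecting, for each fixed choice of arguments, the index that realizes the maximum and then exploiting that the two coefficients $t^{s}$ and $m(1-t)^{s}$ appearing in the definition are nonnegative for $t\in[0,1]$ and $m\in(0,1]$. So fix $x,y\in(0,\infty)$ and $t\in[0,1]$, and write $z=\left(\frac{t}{x}+\frac{1-t}{my}\right)^{-1}=\frac{mxy}{mty+(1-t)x}$ for the harmonic-type combination of $x$ and $y$. Since $f=\max_{1\le i\le n}f_i$ is a maximum over a finite family, there exists an index $j=j(x,y,t)\in\{1,\dots,n\}$ with $f(z)=f_j(z)$.

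Next I would apply the hypothesis that $f_j$ is harmonically $(s,m)$-convex in second sense to the point $z$, obtaining
\[
f(z)=f_j(z)=f_j\!\left(\Big(\tfrac{t}{x}+\tfrac{1-t}{my}\Big)^{-1}\right)\le t^{s} f_j(x)+m(1-t)^{s} f_j(y).
\]
Then, since by definition of the maximum $f_j(x)\le f(x)$ and $f_j(y)\le f(y)$, and since $t^{s}\ge 0$ and $m(1-t)^{s}\ge 0$, the right-hand side is bounded above by $t^{s} f(x)+m(1-t)^{s} f(y)$. Chaining these inequalities yields
\[
f\!\left(\Big(\tfrac{t}{x}+\tfrac{1-t}{my}\Big)^{-1}\right)\le t^{s} f(x)+m(1-t)^{s} f(y),
\]
which is exactly the required inequality. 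Since $x,y\in(0,\infty)$ and $t\in[0,1]$ were arbitrary, $f$ is harmonically $(s,m)$-convex in second sense.

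There is essentially no serious obstacle here; the only point that needs to be made carefully is the monotonicity step, namely that multiplying the coordinatewise bounds $f_j(x)\le f(x)$, $f_j(y)\le f(y)$ by the nonnegative weights $t^s$ and $m(1-t)^s$ preserves the inequalities — this is where one uses $t\in[0,1]$ and $m\in(0,1]$. One should also note explicitly that finiteness of the family guarantees the maximum is attained, so that the index $j$ exists; for an infinite family the same argument would at best give a sup-inequality rather than an equality at the point $z$, but that is not needed here.
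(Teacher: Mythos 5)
Your argument is correct: choosing an index $j$ attaining the maximum at the harmonic combination, applying the $(s,m)$-harmonic convexity of $f_j$, and then enlarging $f_j(x)$, $f_j(y)$ to $f(x)$, $f(y)$ using the nonnegativity of the weights $t^{s}$ and $m(1-t)^{s}$ is exactly the standard proof of this fact. The paper states this proposition without giving a proof, and your argument is the natural one the authors evidently intended, so there is nothing further to reconcile.
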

\begin{proposition}
Let $f_{n}:  (0, \infty) \rightarrow \mathbb{R}$ be a sequence of harmonically $(s,m)$-convex in second sense, where $s,m \in (0, 1]$ and $f_{n}(x) \rightarrow f(x)  $, then function  $f$ is also harmonically $(s,m)$-convex in second sense.
\end{proposition}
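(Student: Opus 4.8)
The plan is to argue directly from the definition of harmonic $(s,m)$-convexity in the second sense, exploiting the fact that the defining inequality is \emph{preserved under pointwise limits} because the coefficients $t^{s}$ and $m(1-t)^{s}$ on the right-hand side do not depend on $n$. Concretely, fix arbitrary $x,y\in(0,\infty)$ and $t\in[0,1]$, and write $z:=\bigl(\tfrac{t}{x}+\tfrac{1-t}{my}\bigr)^{-1}$ for the harmonic-type mean point; note $z\in(0,\infty)$, so each $f_n$ is defined at $z$. For every $n$ the hypothesis gives
\begin{equation*}
 f_{n}(z)\;\leq\; t^{s} f_{n}(x) + m(1-t)^{s} f_{n}(y).
\end{equation*}

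Next I would pass to the limit $n\to\infty$ in this inequality. By the assumed pointwise convergence $f_{n}(u)\to f(u)$ for each $u\in(0,\infty)$, applied at the three points $u=z$, $u=x$, $u=y$, the left-hand side converges to $f(z)$ and the right-hand side converges to $t^{s} f(x)+m(1-t)^{s} f(y)$. Since a weak inequality between convergent real sequences is inherited by their limits, we obtain
\begin{equation*}
 f(z)\;=\;f\Bigl(\bigl(\tfrac{t}{x}+\tfrac{1-t}{my}\bigr)^{-1}\Bigr)\;\leq\; t^{s} f(x) + m(1-t)^{s} f(y).
\end{equation*}
As $x,y\in(0,\infty)$ and $t\in[0,1]$ were arbitrary, this is exactly the statement that $f$ is harmonically $(s,m)$-convex in the second sense.

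There is essentially no obstacle here: the only thing one must be slightly careful about is that the relevant mean point $z$ indeed lies in $(0,\infty)$ (so that $f$ and all $f_n$ are defined there), which is immediate since $t/x+(1-t)/(my)$ is a positive real number for $x,y>0$ and $t\in[0,1]$; and that the convergence hypothesis is used simultaneously at the three fixed points $z,x,y$, which is legitimate because pointwise convergence gives convergence at every individual point of the domain. No uniformity of convergence, continuity of the $f_n$, or any regularity is needed — the monotone behaviour of weak inequalities under limits does all the work. Thus the proof is a short two-line passage to the limit, and I would present it in that compact form.
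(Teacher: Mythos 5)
Your proof is correct: fixing $x,y,t$, writing the defining inequality for each $f_{n}$ at the point $\bigl(\tfrac{t}{x}+\tfrac{1-t}{my}\bigr)^{-1}$, and letting $n\to\infty$ using pointwise convergence at the three relevant points is exactly the standard argument, and the paper (which states this proposition without a written proof) clearly has the same passage-to-the-limit in mind. No gap; your remark that neither uniformity nor continuity is needed is accurate.
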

\begin{proposition}
Let $f:  (0, \infty) \rightarrow \mathbb{R}$ be a harmonically $(s_{1},m)$-convex in second sense and let $g:  (0, \infty) \rightarrow \mathbb{R}$ be a harmonically $(s_{2},m)$-convex in second sense, where $s_{1}, s_{2}, m \in (0, 1].$ Then $f + g $ is harmonically $(s,m)$-convex in second sense, where $ s = \min \{s_{1}, s_{2}\}$.
\end{proposition}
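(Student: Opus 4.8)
The plan is to reduce the statement to the two defining inequalities together with one elementary fact about powers of numbers in $[0,1]$. Fix $x,y\in(0,\infty)$ and $t\in[0,1]$, and abbreviate the harmonic-type interpolation point by
$$z:=\left(\frac{t}{x}+\frac{1-t}{my}\right)^{-1}=\frac{mxy}{mty+(1-t)x}.$$
Since $f$ is harmonically $(s_{1},m)$-convex and $g$ is harmonically $(s_{2},m)$-convex in the second sense, applying each definition to the same triple $(x,y,t)$ gives
$$f(z)\le t^{s_{1}}f(x)+m(1-t)^{s_{1}}f(y),\qquad g(z)\le t^{s_{2}}g(x)+m(1-t)^{s_{2}}g(y).$$
Adding these two inequalities yields
$$(f+g)(z)\le t^{s_{1}}f(x)+t^{s_{2}}g(x)+m(1-t)^{s_{1}}f(y)+m(1-t)^{s_{2}}g(y).$$

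Next I would invoke the elementary monotonicity of the power map in the exponent: for $r\in[0,1]$ and $0<s\le\sigma\le1$ one has $r^{\sigma}\le r^{s}$ (immediate from $r^{\sigma}=r^{s}\,r^{\sigma-s}$ with $r^{\sigma-s}\le1$, and trivially true when $r=0$). With $s:=\min\{s_{1},s_{2}\}$ we have $s\le s_{1}$ and $s\le s_{2}$, so taking $r=t$ and $r=1-t$ gives
$$t^{s_{1}}\le t^{s},\quad t^{s_{2}}\le t^{s},\quad (1-t)^{s_{1}}\le (1-t)^{s},\quad (1-t)^{s_{2}}\le (1-t)^{s}.$$
Because $(s,m)$-convexity in the second sense is considered for nonnegative functions, we have $f(x),f(y),g(x),g(y)\ge0$, and multiplying the displayed power inequalities by these nonnegative values preserves them. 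Substituting into the bound from the first paragraph and grouping the $x$-terms and the $y$-terms produces
$$(f+g)(z)\le t^{s}\bigl(f(x)+g(x)\bigr)+m(1-t)^{s}\bigl(f(y)+g(y)\bigr)=t^{s}(f+g)(x)+m(1-t)^{s}(f+g)(y).$$
Since $x,y\in(0,\infty)$ and $t\in[0,1]$ were arbitrary, this is exactly the assertion that $f+g$ is harmonically $(s,m)$-convex in the second sense with $s=\min\{s_{1},s_{2}\}$.

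The only genuine subtlety — the step I expect to be the main point to get right — is the replacement of the exponents $s_{1},s_{2}$ by the smaller exponent $s$. This direction of monotonicity ($t^{s_{i}}\le t^{s}$) is only useful when it is paired with nonnegativity of the corresponding function value; for a negative value the inequality $t^{s_{i}}f_{i}(x)\le t^{s}f_{i}(x)$ would reverse and the conclusion could fail. So the argument implicitly (and, as usual in the second-sense setting, harmlessly) uses $f,g\ge0$, and it is worth stating this. Everything else is routine bookkeeping.
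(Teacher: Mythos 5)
Your argument is correct and is surely the intended one: apply the two defining inequalities at the same harmonic interpolation point $z$, add them, and replace $t^{s_{i}}$, $(1-t)^{s_{i}}$ by $t^{s}$, $(1-t)^{s}$ via the monotonicity $r^{\sigma}\le r^{s}$ for $r\in[0,1]$, $s\le\sigma$. The paper states this proposition without any proof at all, so there is nothing to compare your route against; your write-up simply supplies the missing argument. One substantive remark: the nonnegativity you flag at the end is not a harmless convention here but genuinely necessary, and it is not guaranteed by the paper's definition, which allows $f:I\subset(0,\infty)\rightarrow\mathbb{R}$. Indeed, take $m\in(0,1]$, $s_{2}=1$ and $g\equiv -c$ with $c>0$: then $g$ is harmonically $(1,m)$-convex, since the required inequality reduces to $t+m(1-t)\le 1$; and $f\equiv 0$ is harmonically $(s_{1},m)$-convex for any $s_{1}\in(0,1)$. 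But $f+g\equiv -c$ is not harmonically $(s_{1},m)$-convex, because that would require $t^{s_{1}}+m(1-t)^{s_{1}}\le 1$ for all $t\in[0,1]$, which fails for $t$ near $1$ (there $1-t^{s_{1}}\approx s_{1}(1-t)$ while $m(1-t)^{s_{1}}$ is much larger). So the statement as printed is false without an added hypothesis such as $f,g\ge 0$; nonnegativity is automatic only in special cases (for instance $m=1$ and $s_{i}<1$, by the usual $x=y$, $t=\tfrac12$ argument), and under the hypothesis $f,g\ge 0$ your proof is complete and correct.
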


\begin{proposition}
Let $f:  (0, \infty) \rightarrow \mathbb{R}$ be a harmonically $(s,m)$-convex in second sense , where $s, m \in (0, 1].$ If $\lambda > 0$,  then $ \lambda f $ is harmonically $(s,m)$-convex in second sense.
\end{proposition}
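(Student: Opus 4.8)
The plan is to verify the defining inequality for $\lambda f$ directly from that of $f$. Fix arbitrary $x, y \in (0,\infty)$ and $t \in [0,1]$. Since $f$ is harmonically $(s,m)$-convex in the second sense, we have
\[
f\left(\left(\frac{t}{x} + \frac{1-t}{my}\right)^{-1}\right) \leq t^{s} f(x) + m(1-t)^{s} f(y).
\]
Because $\lambda > 0$, multiplying both sides by $\lambda$ preserves the direction of the inequality, which yields
\[
(\lambda f)\left(\left(\frac{t}{x} + \frac{1-t}{my}\right)^{-1}\right) \leq \lambda\left(t^{s} f(x) + m(1-t)^{s} f(y)\right) = t^{s} (\lambda f)(x) + m(1-t)^{s} (\lambda f)(y).
\]
Since $x$, $y$, $t$ were arbitrary, $\lambda f$ satisfies the definition with the same parameters $s$ and $m$, and hence is harmonically $(s,m)$-convex in the second sense.

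There is essentially no obstacle here: the only point that matters is the sign of $\lambda$, since positivity is exactly what keeps the inequality from reversing (for $\lambda = 0$ the claim is trivial, as $\lambda f$ is then the zero function). I would also note in a remark that, combined with the preceding proposition on the sum of two such functions, this shows that any nonnegative linear combination of harmonically $(s,m)$-convex functions in the second sense is again of the same type (with $s$ replaced by the minimum of the exponents involved).
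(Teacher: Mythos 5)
Your proof is correct: multiplying the defining inequality by $\lambda>0$ is exactly the argument needed, and it matches the (omitted) standard verification the paper intends for this proposition, which it states without proof. The closing remark about nonnegative combinations, via the sum proposition with $s=\min\{s_1,s_2\}$, is also accurate.
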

\begin{proposition}
Let $f:  [0, b] \rightarrow \mathbb{R}$, $b > 0$ be a harmonically $m$-convex in second sense with $m \in (0, 1]$, and $g: I \subseteq f([0, b]) \rightarrow \mathbb{R}$ be nondecreasing and $(s,m)$-convex function in second sense on $I$ for some fixed $s$, then $gof$ is harmonically $(s,m)$-convex in second sense on $[0,b]$.
\end{proposition}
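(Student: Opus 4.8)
The plan is to chain the two structural hypotheses in the obvious order: first exploit the harmonic $m$-convexity of $f$ to turn the weight hidden inside the harmonic mean into an ordinary $m$-convex combination of $f(x)$ and $f(y)$; then use the monotonicity of $g$ to push $g$ across that inequality; and finally invoke the $(s,m)$-convexity of $g$ to split the right-hand side into the desired $t^{s}$, $m(1-t)^{s}$ combination.

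Concretely, fix $x,y\in[0,b]$ and $t\in[0,1]$ and set $z=\frac{mxy}{mty+(1-t)x}=\left(\frac{t}{x}+\frac{1-t}{my}\right)^{-1}$. First I would apply the definition of harmonic $m$-convexity of $f$ (that is, harmonic $(1,m)$-convexity, cf. Remark \ref{IR1}) to get $f(z)\le t\,f(x)+m(1-t)\,f(y)$. Since $g$ is nondecreasing on $I$ and both $f(z)$ and $t\,f(x)+m(1-t)\,f(y)$ lie in $I$ (this is the point needing the domain hypothesis, discussed below), this yields $(g\circ f)(z)=g(f(z))\le g\big(t\,f(x)+m(1-t)\,f(y)\big)$. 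Now apply the $(s,m)$-convexity of $g$ with the two points $u=f(x)$ and $v=f(y)$ of $I$: $g\big(t\,f(x)+m(1-t)\,f(y)\big)\le t^{s}g(f(x))+m(1-t)^{s}g(f(y))=t^{s}(g\circ f)(x)+m(1-t)^{s}(g\circ f)(y)$. Concatenating the three displays gives $(g\circ f)(z)\le t^{s}(g\circ f)(x)+m(1-t)^{s}(g\circ f)(y)$, which is exactly the assertion that $g\circ f$ is harmonically $(s,m)$-convex in second sense on $[0,b]$.

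The only genuine obstacle is domain bookkeeping for $g$. For the composition $g\circ f$ to be defined on all of $[0,b]$ and for $f(z),f(x),f(y)$ to be legitimate arguments of $g$ one needs $f([0,b])\subseteq I$, and for the last step one needs the point $t\,f(x)+m(1-t)\,f(y)$ to lie in $I$ as well, i.e.\ $I$ must be closed under the combinations $t\,u+m(1-t)\,v$ for $u,v\in I$. The latter is exactly the standing requirement implicit in saying that $g$ is ``$(s,m)$-convex on $I$'', so under the intended reading the argument is complete; I would simply record these two domain conditions explicitly at the start of the proof. Note that beyond the definitions the argument uses nothing about $f$ except harmonic $m$-convexity and nothing about $g$ except monotonicity together with $(s,m)$-convexity, and it is uniform in $s,m\in(0,1]$, so no case analysis on the parameters is needed.
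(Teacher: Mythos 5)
Your argument is correct and is exactly the intended one: the paper states this proposition without proof, and the natural argument is the chain you give — harmonic $m$-convexity of $f$ to get $f(z)\le t f(x)+m(1-t)f(y)$, monotonicity of $g$ to pass $g$ across this inequality, and then $(s,m)$-convexity in second sense of $g$ at the points $f(x),f(y)$ to produce the weights $t^{s}$ and $m(1-t)^{s}$. Your domain remarks are also on point: the hypothesis as printed, $I\subseteq f([0,b])$, should be read as $f([0,b])\subseteq I$ (otherwise $g\circ f$ need not even be defined), and the requirement that $t\,u+m(1-t)\,v\in I$ for $u,v\in I$ is implicit in calling $g$ $(s,m)$-convex on $I$, so recording these conditions, as you do, completes the proof.
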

Let $f: I \subseteq (0,\infty) \rightarrow \mathbb{R}$ be a differentiable function on $I^{\circ}$, throughout this article we will assume that
$$ I_{f}(\lambda, \mu,a,b)= (\lambda - \mu) f\vast(  \frac{a + b}{2} \vast) + (1 - \lambda)f(a) + \mu f(b) - \frac{2ab}{b - a}\int_{a}^{b} \frac{f(u)}{u^{2}}du ,$$
where $a,b \in I$ with $ a < b$ and $\lambda, \mu \in \mathbb{R}.$\\
In \cite{Isk},  I.I\c{s}can et al established the following equality
\begin{lemma}\label{IL1}
Let $f: I \subseteq (0,\infty) \rightarrow \mathbb{R}$ be a differentiable function on $I^{\circ}$ such that $f' \in L[a,b]$, where $a,b \in I$ with $ a < b.$ Then for all $\lambda, \mu \in \mathbb{R}$, we have
$$ I_{f}(\lambda, \mu,a,b) = ab(b - a)\vast\{ \int_{0}^{\frac{1}{2}} \frac{\mu - t}{A^{2}_{t}} f'\vast(\frac{ab}{A_{t}}\vast)dt + \int^{1}_{\frac{1}{2}} \frac{\lambda - t}{A^{2}_{t}} f'\vast(\frac{ab}{A_{t}}\vast)dt \vast \},$$
where $A_{t} = tb + (1 - t)a$
\end{lemma}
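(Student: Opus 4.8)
The plan is to prove the identity by starting from the right-hand side, integrating by parts in each of the two integrals, and then making a single change of variables; the whole computation is bookkeeping, with no conceptual content. The first thing I would record is the key derivative: since $A_t=tb+(1-t)a$ has constant derivative $A_t'=b-a$, the chain rule gives $\frac{d}{dt}f(ab/A_t)=-\,ab(b-a)\,A_t^{-2}\,f'(ab/A_t)$, so the integrand $A_t^{-2}f'(ab/A_t)$ that appears in both integrals on the right-hand side is exactly $-\frac{1}{ab(b-a)}\frac{d}{dt}f(ab/A_t)$. I would also note that $t\mapsto ab/A_t$ is a smooth, strictly monotone reparametrization of $[0,1]$, so $t\mapsto f(ab/A_t)$ is absolutely continuous as soon as $f$ is differentiable with $f'\in L[a,b]$; this is what makes the integration by parts below legitimate under the stated hypotheses, rather than assuming $f\in C^{1}$.

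Next I would write the braced expression as $J_1+J_2$, where $J_1=\int_0^{1/2}\frac{\mu-t}{A_t^{2}}f'(ab/A_t)\,dt$ and $J_2=\int_{1/2}^{1}\frac{\lambda-t}{A_t^{2}}f'(ab/A_t)\,dt$, and integrate by parts in each, taking $\mu-t$ (respectively $\lambda-t$) as the factor to be differentiated and $\frac{d}{dt}f(ab/A_t)$ (up to the constant above) as the factor to be integrated. Because $\frac{d}{dt}(\mu-t)=-1$, the two surviving integral terms recombine into $\int_0^{1/2}f(ab/A_t)\,dt+\int_{1/2}^{1}f(ab/A_t)\,dt=\int_0^{1}f(ab/A_t)\,dt$. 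The boundary terms are evaluated from $A_0=a$, $A_1=b$, so that $ab/A_0=b$ and $ab/A_1=a$: after the factor $ab(b-a)$ is restored, the endpoint $t=0$ contributes $\mu f(b)$, the endpoint $t=1$ contributes $(1-\lambda)f(a)$, and the two contributions at $t=\tfrac12$ coming from $J_1$ and $J_2$ combine, with net coefficient $\lambda-\mu$, into the term $(\lambda-\mu)f(ab/A_{1/2})$ — the middle term of $I_f$. These are precisely the first three terms of $I_f(\lambda,\mu,a,b)$.

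It remains to dispatch the leftover integral $\int_0^1 f(ab/A_t)\,dt$ by the substitution $u=ab/A_t$: then $t\colon 0\to 1$ corresponds to $u\colon b\to a$, and from the derivative above $du=-\frac{ab(b-a)}{A_t^{2}}\,dt=-\frac{b-a}{ab}u^{2}\,dt$, whence $\int_0^1 f(ab/A_t)\,dt=\frac{ab}{b-a}\int_a^b\frac{f(u)}{u^{2}}\,du$. Multiplying the identity obtained for $J_1+J_2$ by $ab(b-a)$ and collecting the terms then reproduces $I_f(\lambda,\mu,a,b)$, which is the claim.

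I do not expect a real obstacle: the argument is routine. The two places that need care are, first, tracking the signs and the precise endpoint values in the two integration-by-parts steps — a slip there only displaces the coefficients of $f(a)$, $f(b)$, or the middle term — and, second, the justification that integration by parts applies when $f$ is merely differentiable with $f'\in L[a,b]$ rather than $C^{1}$, which is handled by the absolute-continuity observation in the first step.
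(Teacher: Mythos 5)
Your proposal is correct, and it is the standard argument: note that the paper itself gives no proof of Lemma \ref{IL1}, quoting it from \cite{Isk}, where the proof is exactly your route (split at $t=\tfrac12$, integrate by parts with $\mu-t$, resp.\ $\lambda-t$, differentiated, then substitute $u=ab/A_t$), so your derivation and your endpoint bookkeeping ($\mu f(b)$ at $t=0$, $(1-\lambda)f(a)$ at $t=1$, net coefficient $\lambda-\mu$ at $t=\tfrac12$) match the source. One caveat worth flagging: what your computation actually yields is
\[
ab(b-a)\Big\{\textstyle\int_0^{1/2}\frac{\mu-t}{A_t^2}f'(\tfrac{ab}{A_t})dt+\int_{1/2}^{1}\frac{\lambda-t}{A_t^2}f'(\tfrac{ab}{A_t})dt\Big\}
=(\lambda-\mu)f\Big(\frac{2ab}{a+b}\Big)+(1-\lambda)f(a)+\mu f(b)-\frac{ab}{b-a}\int_a^b\frac{f(u)}{u^2}\,du,
\]
i.e.\ the middle term is $f(ab/A_{1/2})=f\big(\tfrac{2ab}{a+b}\big)$ and the integral carries the factor $\tfrac{ab}{b-a}$, whereas the definition of $I_f$ printed just before the lemma has $f\big(\tfrac{a+b}{2}\big)$ and $\tfrac{2ab}{b-a}$. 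Those two discrepancies are typos in the paper (all the corollaries in Section 3 use the harmonic mean $\tfrac{2ab}{a+b}$ and the factor $\tfrac{ab}{b-a}$), so you have proved the intended identity; but since you assert that your terms ``reproduce $I_f$'' you should state explicitly that this is $I_f$ with the corrected definition, otherwise the claim conflicts with the formula as printed.
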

In this paper, we establish more general form of Hermite-hadamard and Simpson type inequalities by using Lemma \ref{IL1} for harmonically $(s,m)$-convex functions.
\section{\textbf{Main Results}}
Now, we present our main results which are more general in the following section.
The Beta function, the Gamma function and the integral form of the hypergeometric function are defined as follows to be used in the sequel of paper
$$ B(\alpha,\beta) = \frac{\Gamma(\alpha) \Gamma(\beta)}{\Gamma(\alpha + \beta)} = \int_{0}^{1} t^{\alpha - 1} (1 - t)^{\beta - 1} dt,\; \alpha,\beta>0 $$

$$\Gamma(\alpha) = \int_{0}^{\infty} t^{\alpha - 1} e ^{-t} dt, \; \alpha > 0   $$
 and
 $$_{2}F_{1}(\alpha,\beta;\gamma,z) = \frac{1}{B(\beta,\gamma - \beta)}\int_{0}^{1} t^{\beta - 1} (1 - t)^{\gamma - \beta -1} (1 - zt)^{-\alpha} dt,\; \gamma>\beta>0,\;|z|<1  $$
\begin{theorem}\label{MT1}
Let $f:I \subset (0,\infty) \rightarrow \mathbb{R}$ be a differentiable function on $I^{\circ}$ such that $f' \in L[a,b]$, where $a,\frac{b}{m} \in I^{\circ}$ with $a < b$. If $|f'|^{q}$ is harmonically $(s,m)$-convex on $[a,\frac{b}{m}]$ for some fixed $q \geq 1$ and $ 0 \leq \mu \leq \frac{1}{2} \leq \lambda \leq 1$, then following inequality holds
\begin{eqnarray*}\label{MIE1}
\vast|I_{f}(\lambda,\mu,a,b)\vast| &\leq&  ab(b - a)\vast \{\mathcal{B}_{1}^{1 - \frac{1}{q}}(\mu)
\vast (|f'(a)|^{q} \mathcal{B}_{2}(\mu,q,a,b) + m|f'(\frac{b}{m})|^{q} \mathcal{B}_{3}(\mu,q,a,b) \vast)^{\frac{1}{q}}\\
&+& \mathcal{B}_{4}^{1 - \frac{1}{q}}(\lambda) \vast (|f'(a)|^{q} \mathcal{B}_{5}(\lambda,q,a,b) + m|f'(\frac{b}{m})|^{q} \mathcal{B}_{6}(\lambda,q,a,b) \vast) \vast\}
\end{eqnarray*}
where

$$
 \mu^{2} - \frac{\mu}{2} + \frac{1}{8} := \mathcal{B}_{1}(\mu)$$,
 $$ \lambda^{2} - \frac{3\lambda}{2} + \frac{5}{8}:= \mathcal{B}_{4}(\lambda),
$$

$$\mathcal{B}_{2}(\mu,q,a,b) = \left\{
                                                            \begin{array}{ll}
                                                              \frac{2^{2q - s- 2}\beta(1,s + 2)}{(a + b)^{2q}}.{}_{2}F_{1}(2q,1,s + 3,1 - \frac{2a}{b + a}), & \hbox{$    \mu = 0$} \\
                                                              {}\\
                                                               \frac{2 \mu^{s + 2}\beta(2,s + 1)}{[\mu b + (1 - \mu)a]^{2q}}.{}_{2}F_{1}(2q,2,s + 3,1 - \frac{a}{\mu b + (1 - \mu)a})
                                                                - \frac{\mu 2^{2q - s- 2}\beta(1,s + 1)}{(b + a)^{2q}}.{}_{2}F_{1}(2q,1,s + 2,1 - \frac{2a}{ b + a})\\
                                                                +\; \frac{ 2^{2q - s- 2}\beta(1,s + 2)}{(b + a)^{2q}}.{}_{2}F_{1}(2q,1,s + 3,1 - \frac{2a}{ b + a}),&\hbox{$0 < \mu < \frac{1}{2}$}\\
                                                               {}\\
                                                              \frac{2^{2q - s - 2}\beta(2,s + 1)}{(b + a)^{2q}}.{}_{2}F_{1}(2q,2,s + 3,1 - \frac{2a}{b + a}), &\hbox{$\mu = \frac{1}{2}$}
                                                            \end{array}
                                                          \right.  $$

$$\mathcal{B}_{3}(\mu,q,a,b) = \left\{
                                                            \begin{array}{ll}
                                                              \frac{\beta(s + 1, s + 3)}{b^{2q}}.{}_{2}F_{1}(2q,s + 1,s +3,1 - \frac{a}{b}) - \frac{\beta(s + 1,1)}{2^{s + 2}b^{2q}}.{}_{2}F_{1}(2q,s + 1,s + 2,1 - \frac{b + a}{2b})\\
                                                     - \frac{\beta(s + 1,2)}{2^{s + 2}b^{2q}}.{}_{2}F_{1}(2q,s + 1,s + 3,1 - \frac{b + a}{2b}), & \hbox{$    \mu = 0$} \\
                                                              {}\\
                                                               \frac{\mu \beta(s + 1,1)}{b^{2q}}.{}_{2}F_{1}(2q,s + 1,s + 2,1 - \frac{a}{b}) - \frac{\beta(s + 1,2)}{b^{2q}}.{}_{2}F_{1}(2q,s + 1,s + 3,1 - \frac{a}{b})\\
                            +\;2 \frac{(1 - \mu)^{s + 2} \beta(s + 1,2)}{b^{2q}}.{}_{2}F_{1}(2q,s + 1,s + 3,(1 - \mu)(1 - \frac{a}{b}))\\
                                                              +\; (\mu - 1)\frac{ \beta(s + 1,1)}{2^{s + 2}b^{2q}}.{}_{2}F_{1}(2q,s + 1,s + 2,1 - \frac{b + a}{2b})\\
                                                              + \;\frac{ \beta(s + 1,2)}{2^{s + 2}b^{2q}}.{}_{2}F_{1}(2q,s + 1,s + 3,1 - \frac{b + a}{2b}) ,&\hbox{$0 < \mu < \frac{1}{2}$}\\
                                                               {}\\
                                                              \frac{\beta(s + 1,1)}{2 b^{2q}}.{}_{2}F_{1}(2q,s + 2,s + 3,1 - \frac{a}{b })- \frac{\beta(s + 1,2)}{2 b^{2q}}.{}_{2}F_{1}(2q,s + 2,s + 3,1 - \frac{a}{b })\\
                                                              \frac{\beta(s + 1,2)}{2^{s + 2} b^{2q}}.{}_{2}F_{1}(2q,s + 2,s + 3,1 - \frac{b + a}{2b }), &\hbox{$\mu = \frac{1}{2}$}
                                                            \end{array}
                                                          \right.  $$
$$\mathcal{B}_{5}(\mu,q,a,b) = \left\{
                                                            \begin{array}{ll}
                                                              \frac{\beta(1,s + 2)}{b^{2q}}.{}_{2}F_{1}(2q,1,s + 3,1 - \frac{a}{b}) - \frac{2^{2q - s- 2}\beta(1,s + 2)}{(a + b)^{2q}}.{}_{2}F_{1}(2q,1,s + 3,1 - \frac{2a}{b + a}), & \hbox{$    \lambda = 0$} \\
                                                              {}\\
                                                               \frac{2 \lambda^{s + 2}\beta(2,s + 1)}{[\lambda b + (1 - \lambda)a]^{2q}}.{}_{2}F_{1}(2q,2,s + 3,1 - \frac{a}{\lambda b + (1 - \lambda)a})
                                                                - \frac{\lambda 2^{2q - s- 2}\beta(1,s + 1)}{(b + a)^{2q}}.{}_{2}F_{1}(2q,1,s + 2,1 - \frac{2a}{ b + a})\\
                                                                +\; \frac{ 2^{2q - s- 2}\beta(1,s + 2)}{(b + a)^{2q}}.{}_{2}F_{1}(2q,1,s + 3,1 - \frac{2a}{ b + a}) + \frac{\beta(1,s + 2)}{b^{2q}}.{}_{2}F_{1}(2q,1,s + 3,1 - \frac{a}{b})\\
                                                                - \frac{\lambda \beta(1,s + 1)}{b^{2q}}.{}_{2}F_{1}(2q,1,s + 2,1 - \frac{a}{b}),&\hbox{$0 < \lambda < \frac{1}{2}$}\\
                                                               {}\\
                                                              \frac{\beta(1,s + 2)}{2b^{2q}}.{}_{2}F_{1}(2q,1,s + 3,1 - \frac{a}{b})  + \frac{2^{2q - s - 2}\beta(2,s + 1)}{(b + a)^{2q}}.{}_{2}F_{1}(2q,2,s + 3,1 - \frac{2a}{b + a})\\
                                                             - \frac{\beta(2,s + 1)}{2b^{2q}}.{}_{2}F_{1}(2q,2,s + 3,1 - \frac{a}{b}), &\hbox{$\lambda = \frac{1}{2}$}
                                                            \end{array}
                                                          \right.  $$

$$\mathcal{B}_{6}(\mu,q,a,b) = \left\{
                                                            \begin{array}{ll}
                                                            \frac{\beta(s + 1,1)}{2^{s + 2}b^{2q}}.{}_{2}F_{1}(2q,s + 1,s + 2,1 - \frac{b + a}{2b})
                                                 - \frac{\beta(s + 1,2)}{2^{s + 2}b^{2q}}.{}_{2}F_{1}(2q,s + 1,s + 3,1 - \frac{b + a}{2b}), & \hbox{$    \lambda = 0$} \\
                                                              {}\\
                                                               \frac{2\lambda \beta(s + 1,1)}{b^{2q}}.{}_{2}F_{1}(2q,s + 1,s + 2,1 - \frac{a}{b})
                                                              +\; (\lambda - 1)\frac{ \beta(1,s + 1)}{2^{s + 1}b^{2q}}.{}_{2}F_{1}(2q, 1,s + 2,1 - \frac{b + a}{2b})\\
                                                             +\;2 \frac{(1 - \lambda)^{s + 2} \beta(s + 1,2)}{b^{2q}}.{}_{2}F_{1}(2q,s + 1,s + 3,(1 - \lambda)(1 - \frac{a}{b}))\\
                                                              + \;\frac{ \beta(2,s + 1)}{2^{s + 1}b^{2q}}.{}_{2}F_{1}(2q,2,s + 3,1 - \frac{b + a}{2b}) ,&\hbox{$0 <  \lambda < \frac{1}{2}$}\\
                                                               {}\\
                                                              \frac{\beta(s + 1,2)}{2^{s + 2} b^{2q}}.{}_{2}F_{1}(2q,s + 2,s + 3,1 - \frac{b + a}{2b }), &\hbox{$\lambda = \frac{1}{2}$}
                                                            \end{array}
                                                          \right.  $$
\end{theorem}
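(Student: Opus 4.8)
The plan is to start from the identity in Lemma~\ref{IL1}, pass to absolute values, and bound the two resulting integrals over $[0,\tfrac12]$ and $[\tfrac12,1]$ separately by the triangle inequality. With $A_t=tb+(1-t)a$ as in Lemma~\ref{IL1}, the key observation is that $\tfrac{ab}{A_t}=\big(\tfrac ta+\tfrac{1-t}b\big)^{-1}$, so $\tfrac{ab}{A_t}$ ranges over $[a,b]\subseteq[a,\tfrac bm]$ as $t$ ranges over $[0,1]$ (here $\tfrac bm\ge b>a$ since $0<m\le1$). Taking $x=a$ and $y=\tfrac bm$ (so that $my=b$) in the definition of harmonic $(s,m)$-convexity applied to $|f'|^{q}$ on $[a,\tfrac bm]$ gives the pointwise bound
$$\Big|f'\Big(\tfrac{ab}{A_t}\Big)\Big|^{q}\le t^{s}|f'(a)|^{q}+m(1-t)^{s}\Big|f'\Big(\tfrac bm\Big)\Big|^{q},\qquad t\in[0,1].$$

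Next I would estimate $\int_0^{1/2}\tfrac{|\mu-t|}{A_t^{2}}\big|f'(\tfrac{ab}{A_t})\big|\,dt$ by the power mean inequality, splitting the weight as $|\mu-t|=|\mu-t|^{1-1/q}\,|\mu-t|^{1/q}$ and applying H\"older's inequality with the conjugate exponents $\tfrac{q}{q-1}$ and $q$. This produces the factor $\big(\int_0^{1/2}|\mu-t|\,dt\big)^{1-1/q}$, and an elementary computation (splitting the range at $t=\mu$ when $0<\mu<\tfrac12$) identifies $\int_0^{1/2}|\mu-t|\,dt=\mu^{2}-\tfrac\mu2+\tfrac18=\mathcal B_1(\mu)$. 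Inserting the convexity bound above into the remaining factor $\big(\int_0^{1/2}\tfrac{|\mu-t|}{A_t^{2q}}|f'(\tfrac{ab}{A_t})|^{q}\,dt\big)^{1/q}$ shows that the coefficients of $|f'(a)|^{q}$ and $m|f'(\tfrac bm)|^{q}$ are precisely $\mathcal B_2(\mu,q,a,b)=\int_0^{1/2}\tfrac{|\mu-t|\,t^{s}}{A_t^{2q}}\,dt$ and $\mathcal B_3(\mu,q,a,b)=\int_0^{1/2}\tfrac{|\mu-t|(1-t)^{s}}{A_t^{2q}}\,dt$. Running the identical argument on $[\tfrac12,1]$ with $\lambda$ in place of $\mu$ yields the factor $\mathcal B_4(\lambda)^{1-1/q}$ with $\mathcal B_4(\lambda)=\int_{1/2}^1|\lambda-t|\,dt=\lambda^{2}-\tfrac{3\lambda}2+\tfrac58$, together with $\mathcal B_5(\lambda,q,a,b)=\int_{1/2}^1\tfrac{|\lambda-t|\,t^{s}}{A_t^{2q}}\,dt$ and $\mathcal B_6(\lambda,q,a,b)=\int_{1/2}^1\tfrac{|\lambda-t|(1-t)^{s}}{A_t^{2q}}\,dt$. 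Adding the two estimates and multiplying through by $ab(b-a)$ gives the asserted inequality, once the $\mathcal B_i$ have been put in closed form.

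The remaining and only substantial step is the evaluation of $\mathcal B_2,\dots,\mathcal B_6$, which is where the case distinctions $\mu=0$, $0<\mu<\tfrac12$, $\mu=\tfrac12$ (and the analogues for $\lambda$) come from: removing the absolute value splits the range of integration at $t=\mu$ (resp. $t=\lambda$), with one of the two pieces empty in each boundary case. On every resulting subinterval the integrand has the shape $t^{\alpha}(c-t)^{\beta}\,(a+t(b-a))^{-2q}$, or the same with $(1-t)^{s}$ in place of $t^{s}$; an affine change of variable carrying the subinterval onto $[0,1]$, together with writing $A_t$ as a constant multiple of $(1-zw)$ (the constant being the value of $A_t$ at the relevant endpoint), where $z$ is one of $1-\tfrac{a}{\mu b+(1-\mu)a}$, $1-\tfrac{2a}{a+b}$, $1-\tfrac ab$, $1-\tfrac{a+b}{2b}$, $(1-\mu)(1-\tfrac ab)$, all with $|z|<1$, converts each piece into a constant multiple of an Euler-type integral $\int_0^1 w^{\beta-1}(1-w)^{\gamma-\beta-1}(1-zw)^{-2q}\,dw=B(\beta,\gamma-\beta)\,{}_2F_1(2q,\beta;\gamma;z)$. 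Collecting the pieces and simplifying the powers of $2$ that arise from the endpoint $t=\tfrac12$ reproduces exactly the branch formulas stated for $\mathcal B_2,\mathcal B_3,\mathcal B_5,\mathcal B_6$; the case $q=1$ is covered automatically, the power mean step then being vacuous. I expect the main obstacle to be purely the bookkeeping here --- tracking which endpoint each sub-piece is anchored at so that all the distinct hypergeometric arguments come out correctly --- rather than anything conceptual.
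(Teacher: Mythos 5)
Your proposal follows essentially the same route as the paper: apply Lemma \ref{IL1}, split the weight $|\mu-t|$ (resp. $|\lambda-t|$) and use the power-mean/H\"older inequality to extract $\mathcal{B}_1^{1-1/q}$ and $\mathcal{B}_4^{1-1/q}$, invoke harmonic $(s,m)$-convexity of $|f'|^q$ with $x=a$, $y=\tfrac{b}{m}$ to get the pointwise bound, and then evaluate the remaining weighted integrals via the Euler integral representation of ${}_2F_1$ with the case split at $t=\mu$ (resp. $t=\lambda$). This matches the paper's proof, which records exactly these integrals and their closed forms, so the proposal is correct and not materially different.
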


\begin{proof}
using Lemma \ref{IL1}, H$\ddot{o}$lder's inequality and harmonically $(s,m)$-convexity in second sense of $|f'|^{q}$, we get

\begin{eqnarray*}
\vast|I_{f}(\lambda,\mu,a,b)\vast| &\leq & ab(b - a)\vast\{ \vast( \int^{\frac{1}{2}}_{0} |\mu - t|dt \vast)^{1 - \frac{1}{q}} \vast (\int^{\frac{1}{2}}_{0} \frac{|\mu - t|}{A_{t}^{2q}} \big|f'\vast( \frac{ab}{A_{t}} \vast)\big|^{q}   \vast)^{\frac{1}{q}}\\
&+& \vast( \int^{1}_{\frac{1}{2}} |\lambda - t|dt \vast)^{1 - \frac{1}{q}} \vast (\int^{1}_{\frac{1}{2}} \frac{|\lambda - t|}{A_{t}^{2q}} \big|f'\vast( \frac{ab}{A_{t}} \vast)\big|^{q}   \vast)^{\frac{1}{q}} \vast\}\\
&\leq& ab(b - a)\vast\{ \vast( \int^{\frac{1}{2}}_{0} |\mu - t|dt \vast)^{1 - \frac{1}{q}}\\
&\times& \vast (|f'(a)|^{q} \int^{\frac{1}{2}}_{0} \frac{|\mu - t|t^{s}}{A_{t}^{2q}}dt + m|f'(\frac{b}{m})|^{q} \int^{\frac{1}{2}}_{0} \frac{|\mu - t| (1 - t)^{s}}{A_{t}^{2q}} \vast)^{\frac{1}{q}}\\
&+& \vast( \int^{1}_{\frac{1}{2}} |\lambda - t|dt \vast)^{1 - \frac{1}{q}} \\
&\times& \vast (|f'(a)|^{q} \int^{1}_{\frac{1}{2}} \frac{|\lambda - t|t^{s}}{A_{t}^{2q}}dt + m|f'(\frac{b}{m})|^{q} \int^{1}_{\frac{1}{2}} \frac{|\lambda - t| (1 - t)^{s}}{A_{t}^{2q}} \vast)^{\frac{1}{q}},\\
\end{eqnarray*}
where, by calculations we find that

$$
\int^{\frac{1}{2}}_{0} |\mu - t|dt = \mu^{2} - \frac{\mu}{2} + \frac{1}{8},
 \;\;\;\int^{1}_{\frac{1}{2}} |\lambda - t|dt = \lambda^{2} - \frac{3\lambda}{2} + \frac{5}{8},
$$

$$\int^{\frac{1}{2}}_{0} \frac{|\mu - t|t^{s}}{A_{t}^{2q}}dt = \left\{
                                                            \begin{array}{ll}
                                                              \frac{2^{2q - s- 2}\beta(1,s + 2)}{(a + b)^{2q}}.{}_{2}F_{1}(2q,1,s + 3,1 - \frac{2a}{b + a}), & \hbox{$    \mu = 0$} \\
                                                              {}\\
                                                               \frac{2 \mu^{s + 2}\beta(2,s + 1)}{[\mu b + (1 - \mu)a]^{2q}}.{}_{2}F_{1}(2q,2,s + 3,1 - \frac{a}{\mu b + (1 - \mu)a})
                                                                - \frac{\mu 2^{2q - s- 2}\beta(1,s + 1)}{(b + a)^{2q}}.{}_{2}F_{1}(2q,1,s + 2,1 - \frac{2a}{ b + a})\\
                                                                +\; \frac{\mu 2^{2q - s- 2}\beta(1,s + 2)}{(b + a)^{2q}}.{}_{2}F_{1}(2q,1,s + 3,1 - \frac{2a}{ b + a}),&\hbox{$0 < \mu < \frac{1}{2}$}\\
                                                               {}\\
                                                              \frac{2^{2q - s - 2}\beta(2,s + 1)}{(b + a)^{2q}}.{}_{2}F_{1}(2q,2,s + 3,1 - \frac{2a}{b + a}), &\hbox{$\mu = \frac{1}{2}$}
                                                            \end{array}
                                                          \right.  $$

$$\int^{\frac{1}{2}}_{0} \frac{|\mu - t|(1 - t)^{s}}{A_{t}^{2q}}dt = \left\{
                                                            \begin{array}{ll}
                                                              \frac{\beta(s + 1, s + 3)}{b^{2q}}.{}_{2}F_{1}(2q,s + 1,s +3,1 - \frac{a}{b}) - \frac{\beta(s + 1,1)}{2^{s + 2}b^{2q}}.{}_{2}F_{1}(2q,s + 1,s + 2,1 - \frac{b + a}{2b})\\
                                                     - \frac{\beta(s + 1,2)}{2^{s + 2}b^{2q}}.{}_{2}F_{1}(2q,s + 1,s + 3,1 - \frac{b + a}{2b}), & \hbox{$    \mu = 0$} \\
                                                              {}\\
                                                               \frac{\mu \beta(s + 1,1)}{b^{2q}}.{}_{2}F_{1}(2q,s + 1,s + 2,1 - \frac{a}{b}) - \frac{\beta(s + 1,2)}{b^{2q}}.{}_{2}F_{1}(2q,s + 1,s + 3,1 - \frac{a}{b})\\
                            +\;2 \frac{(1 - \mu)^{s + 2} \beta(s + 1,2)}{b^{2q}}.{}_{2}F_{1}(2q,s + 1,s + 3,(1 - \mu)(1 - \frac{a}{b}))\\
                                                              +\; (\mu - 1)\frac{ \beta(s + 1,1)}{2^{s + 2}b^{2q}}.{}_{2}F_{1}(2q,s + 1,s + 2,1 - \frac{b + a}{2b})\\
                                                              + \;\frac{ \beta(s + 1,2)}{2^{s + 2}b^{2q}}.{}_{2}F_{1}(2q,s + 1,s + 3,1 - \frac{b + a}{2b}) ,&\hbox{$0 < \mu < \frac{1}{2}$}\\
                                                               {}\\
                                                              \frac{\beta(s + 1,1)}{2 b^{2q}}.{}_{2}F_{1}(2q,s + 2,s + 3,1 - \frac{a}{b })- \frac{\beta(s + 1,2)}{2 b^{2q}}.{}_{2}F_{1}(2q,s + 2,s + 3,1 - \frac{a}{b })\\
                                                              \frac{\beta(s + 1,2)}{2^{s + 2} b^{2q}}.{}_{2}F_{1}(2q,s + 2,s + 3,1 - \frac{b + a}{2b }), &\hbox{$\mu = \frac{1}{2}$}
                                                            \end{array}
                                                          \right.  $$
$$\int_{\frac{1}{2}}^{1} \frac{|\lambda - t|t^{s}}{A_{t}^{2q}}dt = \left\{
                                                            \begin{array}{ll}
                                                              \frac{\beta(1,s + 2)}{b^{2q}}.{}_{2}F_{1}(2q,1,s + 3,1 - \frac{a}{b}) - \frac{2^{2q - s- 2}\beta(1,s + 2)}{(a + b)^{2q}}.{}_{2}F_{1}(2q,1,s + 3,1 - \frac{2a}{b + a}), & \hbox{$    \lambda = 0$} \\
                                                              {}\\
                                                               \frac{2 \lambda^{s + 2}\beta(2,s + 1)}{[\lambda b + (1 - \lambda)a]^{2q}}.{}_{2}F_{1}(2q,2,s + 3,1 - \frac{a}{\lambda b + (1 - \lambda)a})
                                                                - \frac{\lambda 2^{2q - s- 2}\beta(1,s + 1)}{(b + a)^{2q}}.{}_{2}F_{1}(2q,1,s + 2,1 - \frac{2a}{ b + a})\\
                                                                +\; \frac{ 2^{2q - s- 2}\beta(1,s + 2)}{(b + a)^{2q}}.{}_{2}F_{1}(2q,1,s + 3,1 - \frac{2a}{ b + a}) + \frac{\beta(1,s + 2)}{b^{2q}}.{}_{2}F_{1}(2q,1,s + 3,1 - \frac{a}{b})\\
                                                                - \frac{\lambda \beta(1,s + 1)}{b^{2q}}.{}_{2}F_{1}(2q,1,s + 2,1 - \frac{a}{b}),&\hbox{$0 < \lambda < \frac{1}{2}$}\\
                                                               {}\\
                                                              \frac{\beta(1,s + 2)}{2b^{2q}}.{}_{2}F_{1}(2q,1,s + 3,1 - \frac{a}{b})  + \frac{2^{2q - s - 2}\beta(2,s + 1)}{(b + a)^{2q}}.{}_{2}F_{1}(2q,2,s + 3,1 - \frac{2a}{b + a})\\
                                                             - \frac{\beta(2,s + 1)}{2b^{2q}}.{}_{2}F_{1}(2q,2,s + 3,1 - \frac{a}{b}), &\hbox{$\lambda = \frac{1}{2}$}
                                                            \end{array}
                                                          \right.  $$

$$\int_{\frac{1}{2}}^{1} \frac{|\lambda - t|(1 - t)^{s}}{A_{t}^{2q}}dt = \left\{
                                                            \begin{array}{ll}
                                                            \frac{\beta(s + 1,1)}{2^{s + 2}b^{2q}}.{}_{2}F_{1}(2q,s + 1,s + 2,1 - \frac{b + a}{2b})
                                                 - \frac{\beta(s + 1,2)}{2^{s + 2}b^{2q}}.{}_{2}F_{1}(2q,s + 1,s + 3,1 - \frac{b + a}{2b}), & \hbox{$    \lambda = 0$} \\
                                                              {}\\
                                                               \frac{2\lambda \beta(s + 1,1)}{b^{2q}}.{}_{2}F_{1}(2q,s + 1,s + 2,1 - \frac{a}{b})
                                                              +\; (\lambda - 1)\frac{ \beta(1,s + 1)}{2^{s + 1}b^{2q}}.{}_{2}F_{1}(2q, 1,s + 2,1 - \frac{b + a}{2b})\\
                                                             +\;2 \frac{(1 - \lambda)^{s + 2} \beta(s + 1,2)}{b^{2q}}.{}_{2}F_{1}(2q,s + 1,s + 3,(1 - \lambda)(1 - \frac{a}{b}))\\
                                                              + \;\frac{ \beta(2,s + 1)}{2^{s + 1}b^{2q}}.{}_{2}F_{1}(2q,2,s + 3,1 - \frac{b + a}{2b}) ,&\hbox{$0 <  \lambda < \frac{1}{2}$}\\
                                                               {}\\
                                                              \frac{\beta(s + 1,2)}{2^{s + 2} b^{2q}}.{}_{2}F_{1}(2q,s + 2,s + 3,1 - \frac{b + a}{2b }), &\hbox{$\lambda = \frac{1}{2}$}
                                                            \end{array}
                                                          \right.  $$

which completes the proof.
\end{proof}
\begin{corollary}
Under the assumption of Theorem \ref{MT1} with $ \lambda = \mu = \frac{1}{2}$, the inequality (\ref{MIE1}) reduced to to the following inequality
\begin{eqnarray*}
\vast| \frac{f(a) + f(b)}{2} - \frac{ab}{b - a}\int_{a}^{b} \frac{f(u)}{u^{2}}du \vast| &\leq&  ab(b - a)\vast( \frac{1}{8} \vast)^{1 - \frac{1}{q}}\vast \{
\vast (|f'(a)|^{q} \mathcal{B}_{2}(\frac{1}{2},q,a,b) + m|f'(\frac{b}{m})|^{q} \mathcal{B}_{3}(\frac{1}{2},q,a,b) \vast)^{\frac{1}{q}}\\
&+& \vast (|f'(a)|^{q} \mathcal{B}_{5}(\frac{1}{2},q,a,b) + m|f'(\frac{b}{m})|^{q} \mathcal{B}_{6}(\frac{1}{2},q,a,b) \vast) \vast\}
\end{eqnarray*}
\end{corollary}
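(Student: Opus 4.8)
The plan is to feed the integral identity of Lemma~\ref{IL1} into the triangle inequality, apply the power-mean (H\"older) inequality to each of the two sub-integrals, invoke the defining inequality of harmonic $(s,m)$-convexity for $|f'|^q$, and finally reduce the resulting elementary integrals to Beta functions and Gauss hypergeometric functions.

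\textbf{Step 1 (Lemma~\ref{IL1} and H\"older).} By Lemma~\ref{IL1} and the triangle inequality,
\[
|I_f(\lambda,\mu,a,b)| \le ab(b-a)\left\{\int_0^{1/2}\frac{|\mu-t|}{A_t^{2}}\left|f'\!\left(\frac{ab}{A_t}\right)\right|dt+\int_{1/2}^{1}\frac{|\lambda-t|}{A_t^{2}}\left|f'\!\left(\frac{ab}{A_t}\right)\right|dt\right\},
\]
with $A_t=tb+(1-t)a$. For $q\ge 1$ I would write each integrand as $|\mu-t|^{1-1/q}\bigl(|\mu-t|A_t^{-2q}|f'(ab/A_t)|^q\bigr)^{1/q}$ and apply H\"older's inequality with exponents $q/(q-1)$ and $q$; this extracts the factors $\bigl(\int_0^{1/2}|\mu-t|\,dt\bigr)^{1-1/q}$ and $\bigl(\int_{1/2}^{1}|\lambda-t|\,dt\bigr)^{1-1/q}$, which after splitting the modulus at $t=\mu$ and $t=\lambda$ equal $\mathcal{B}_1(\mu)=\mu^2-\mu/2+1/8$ and $\mathcal{B}_4(\lambda)=\lambda^2-3\lambda/2+5/8$.

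\textbf{Step 2 (harmonic $(s,m)$-convexity).} The point is that $A_t$ is exactly the denominator in the definition of harmonic $(s,m)$-convexity: taking $x=a$ and $y=b/m$ gives $\dfrac{m\,a\,(b/m)}{mt(b/m)+(1-t)a}=\dfrac{ab}{tb+(1-t)a}=\dfrac{ab}{A_t}$, so the harmonic $(s,m)$-convexity of $|f'|^q$ on $[a,b/m]$ yields
\[
\left|f'\!\left(\frac{ab}{A_t}\right)\right|^q\le t^s|f'(a)|^q+m(1-t)^s\left|f'\!\left(\frac{b}{m}\right)\right|^q .
\]
Substituting this into the two inner $L^q$-integrals and separating the $|f'(a)|^q$ and $m|f'(b/m)|^q$ contributions produces precisely the six integrals
\[
\mathcal{B}_2=\int_0^{1/2}\!\frac{|\mu-t|t^s}{A_t^{2q}}dt,\quad
\mathcal{B}_3=\int_0^{1/2}\!\frac{|\mu-t|(1-t)^s}{A_t^{2q}}dt,\quad
\mathcal{B}_5=\int_{1/2}^{1}\!\frac{|\lambda-t|t^s}{A_t^{2q}}dt,\quad
\mathcal{B}_6=\int_{1/2}^{1}\!\frac{|\lambda-t|(1-t)^s}{A_t^{2q}}dt,
\]
so the claimed inequality follows once these have been evaluated.

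\textbf{Step 3 (evaluating $\mathcal{B}_2$--$\mathcal{B}_6$; the main obstacle).} For each of these I would split the modulus at $t=\mu$ (resp.\ $t=\lambda$), expand $|\mu-t|$ linearly, and reduce every resulting piece, of the form $\int_{c_1}^{c_2}t^{\rho}(1-t)^{\sigma}A_t^{-2q}\,dt$, to the Euler integral $\int_0^1\tau^{\beta-1}(1-\tau)^{\gamma-\beta-1}(1-z\tau)^{-\alpha}d\tau=B(\beta,\gamma-\beta)\,{}_2F_1(\alpha,\beta;\gamma;z)$. The key manipulation is an affine change of variable carrying $[c_1,c_2]$ onto $[0,1]$, followed by factoring the affine map $A_t$ through one of its endpoint values $a=A_0$, $A_\mu=\mu b+(1-\mu)a$, $A_{1/2}=(a+b)/2$ or $b=A_1$, so that $A_t^{-2q}$ becomes a constant times $(1-z\tau)^{-2q}$ with $|z|<1$; matching the exponents $\rho,\sigma$ then identifies each piece with the Beta/${}_2F_1$ terms — and the arguments $1-\tfrac ab$, $1-\tfrac{2a}{a+b}$, $1-\tfrac{a}{\mu b+(1-\mu)a}$, $(1-\mu)(1-\tfrac ab)$, etc.\ — appearing in the statement. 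The three-way case split $\mu=0$, $0<\mu<\tfrac12$, $\mu=\tfrac12$ (and likewise for $\lambda$) is forced because when $\mu$ reaches an endpoint of $[0,\tfrac12]$ one of the two pieces of the split vanishes and the surviving interval changes, so a different factoring endpoint must be chosen to keep $|z|<1$. I expect this bookkeeping — picking the correct substitution and factorisation in each sub-case and checking the convergence condition $|z|<1$ — to be the only real difficulty; conceptually nothing beyond Lemma~\ref{IL1}, H\"older's inequality and harmonic $(s,m)$-convexity is needed, and setting $\lambda=\mu=\tfrac12$ collapses the bound to the corollary stated below.
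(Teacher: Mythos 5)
Your proposal is correct and follows essentially the same route as the paper: the corollary is obtained simply by setting $\lambda=\mu=\frac{1}{2}$ in Theorem \ref{MT1} (so that $\mathcal{B}_{1}(\frac12)=\mathcal{B}_{4}(\frac12)=\frac18$), and your derivation of that theorem via Lemma \ref{IL1}, the power-mean/H\"older step, the harmonic $(s,m)$-convexity bound with $x=a$, $y=\frac{b}{m}$, and the reduction of the resulting integrals to Beta and ${}_{2}F_{1}$ values is exactly the paper's argument. The only caveat is typographical rather than mathematical: with the paper's printed definition of $I_{f}$ the left-hand side at $\lambda=\mu=\frac12$ would read $\frac{f(a)+f(b)}{2}-\frac{2ab}{b-a}\int_{a}^{b}\frac{f(u)}{u^{2}}du$, so one must use the correct form of the identity in Lemma \ref{IL1} (with $f\big(\frac{2ab}{a+b}\big)$ and the factor $\frac{ab}{b-a}$) to match the stated corollary, as your computation implicitly does.
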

\begin{corollary}
Under the assumption of Theorem \ref{MT1} with $ \lambda = 1 $ and $ \mu = 0$, the inequality (\ref{MIE1}) reduced to to the following inequality
\begin{eqnarray*}
\vast| f ( \frac{2ab}{a + b} ) - \frac{ab}{b - a}\int_{a}^{b} \frac{f(u)}{u^{2}}du \vast|  &\leq&  ab(b - a)\vast( \frac{1}{8} \vast)^{1 - \frac{1}{q}}\vast \{
\vast (|f'(a)|^{q} \mathcal{B}_{2}(0,q,a,b) + m|f'(\frac{b}{m})|^{q} \mathcal{B}_{3}(0,q,a,b) \vast)^{\frac{1}{q}}\\
&+& \vast (|f'(a)|^{q} \mathcal{B}_{5}(1,q,a,b) + m|f'(\frac{b}{m})|^{q} \mathcal{B}_{6}(1,q,a,b) \vast) \vast\}
\end{eqnarray*}
\end{corollary}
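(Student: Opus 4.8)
The plan is to derive this corollary directly from Theorem \ref{MT1} by specializing to $\lambda = 1$ and $\mu = 0$. First I would check that this pair is admissible: one has $0 \le \mu = 0 \le \tfrac12 \le \lambda = 1 \le 1$, so all hypotheses of Theorem \ref{MT1} are in force for a differentiable $f$ on $I^{\circ}$ with $f' \in L[a,b]$ and $|f'|^q$ harmonically $(s,m)$-convex on $[a,\tfrac b m]$ for the given $q \ge 1$. Thus (\ref{MIE1}) holds with these values of $\lambda,\mu$, and the corollary is nothing more than reading off what (\ref{MIE1}) says in that case.

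Next I would simplify both sides. On the left, substituting $\lambda = 1$, $\mu = 0$ into $I_f(\lambda,\mu,a,b)$ annihilates the $f(a)$ term (coefficient $1-\lambda = 0$) and the $f(b)$ term (coefficient $\mu = 0$), and retains the midpoint term with coefficient $\lambda - \mu = 1$, so $I_f(1,0,a,b)$ reduces to $f\!\left(\tfrac{2ab}{a+b}\right) - \tfrac{ab}{b-a}\int_a^b \tfrac{f(u)}{u^2}\,du$, which is precisely the quantity inside the modulus on the left of the asserted inequality. On the right, I would evaluate the outer constants: from $\mathcal{B}_1(\mu) = \mu^2 - \tfrac\mu2 + \tfrac18$ one gets $\mathcal{B}_1(0) = \tfrac18$, and from $\mathcal{B}_4(\lambda) = \lambda^2 - \tfrac{3\lambda}{2} + \tfrac58$ one gets $\mathcal{B}_4(1) = \tfrac18$; hence $\mathcal{B}_1^{1-1/q}(0) = \mathcal{B}_4^{1-1/q}(1) = \left(\tfrac18\right)^{1-1/q}$, which I pull out in front of the braces. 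The inner coefficients are then just the relevant branch values $\mathcal{B}_2(0,q,a,b)$, $\mathcal{B}_3(0,q,a,b)$ (from the $\mu = 0$ cases) and $\mathcal{B}_5(1,q,a,b)$, $\mathcal{B}_6(1,q,a,b)$ (from the $\lambda = 1$ evaluation), and assembling these pieces reproduces the displayed inequality.

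The only step needing a bit of care — the one I would flag as the main, if mild, obstacle — is producing $\mathcal{B}_5(1,q,a,b)$ and $\mathcal{B}_6(1,q,a,b)$, since the piecewise formulas for $\mathcal{B}_5,\mathcal{B}_6$ in Theorem \ref{MT1} are displayed only for $\lambda \in [0,\tfrac12]$. For $\lambda = 1$ one has $t \le 1 = \lambda$ throughout $[\tfrac12,1]$, so $|\lambda - t| = 1 - t$ there and the integrals $\int_{1/2}^1 \tfrac{(1-t)\,t^s}{A_t^{2q}}\,dt$ and $\int_{1/2}^1 \tfrac{(1-t)^{s+1}}{A_t^{2q}}\,dt$, with $A_t = tb + (1-t)a$, have no internal sign change; reducing them to Beta functions and Gauss hypergeometric values by the same linear substitution used in the proof of Theorem \ref{MT1} gives the closed forms denoted $\mathcal{B}_5(1,q,a,b)$ and $\mathcal{B}_6(1,q,a,b)$. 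No idea beyond Theorem \ref{MT1} is required: the corollary is pure specialization plus this routine bookkeeping.
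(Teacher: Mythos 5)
Your proof is correct and follows the same route the paper (implicitly) intends: the corollary is pure specialization of Theorem \ref{MT1} to $\lambda=1$, $\mu=0$, using $\mathcal{B}_{1}(0)=\mathcal{B}_{4}(1)=\frac{1}{8}$ to pull out the common factor $\left(\frac{1}{8}\right)^{1-\frac{1}{q}}$, and identifying $I_{f}(1,0,a,b)$ with the midpoint-type quantity on the left. Your additional care about $\mathcal{B}_{5}(1,q,a,b)$ and $\mathcal{B}_{6}(1,q,a,b)$ (the theorem's case analysis is misprinted for $\lambda\le\frac{1}{2}$ rather than $\lambda\ge\frac{1}{2}$) is resolved correctly by noting $|\lambda-t|=1-t$ on $[\frac{1}{2},1]$ and evaluating the resulting Beta/hypergeometric integrals, which is exactly the bookkeeping the paper omits.
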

\begin{corollary}
Under the assumption of Theorem \ref{MT1} with $ \lambda = \frac{5}{6} $ and $\mu = \frac{1}{6}$, the inequality (\ref{MIE1}) reduced to to the following inequality
$$
\vast|\frac{1}{3} \vast[\frac{f(a) + f(b)}{2} + 2 f\vast(  \frac{2ab}{a + b} \vast) \vast] - \frac{ab}{b - a} \int_{a}^{b}\frac{f(u)}{u^{2}}du \vast|$$
$$\leq  ab(b - a)\vast( \frac{5}{72} \vast)^{1 - \frac{1}{q}}\vast \{
\vast (|f'(a)|^{q} \mathcal{B}_{2}(\frac{1}{6},q,a,b) + m|f'(\frac{b}{m})|^{q} \mathcal{B}_{3}(\frac{1}{6},q,a,b) \vast)^{\frac{1}{q}}$$
$$+ \vast (|f'(a)|^{q} \mathcal{B}_{5}(\frac{5}{6},q,a,b) + m|f'(\frac{b}{m})|^{q} \mathcal{B}_{6}(\frac{5}{6},q,a,b) \vast) \vast\}$$
\end{corollary}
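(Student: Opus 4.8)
The plan is to obtain this corollary simply by specializing Theorem \ref{MT1} to the parameters $\mu=\tfrac16$, $\lambda=\tfrac56$; there is no independent argument to run, so the whole task is to check admissibility and to simplify the resulting constants. First I would observe that $\mu=\tfrac16$ and $\lambda=\tfrac56$ satisfy $0\le\mu\le\tfrac12\le\lambda\le1$, so the hypotheses of Theorem \ref{MT1} are met for any fixed $q\ge1$ with $|f'|^{q}$ harmonically $(s,m)$-convex on $[a,\tfrac{b}{m}]$, and inequality (\ref{MIE1}) holds verbatim with this choice of $\lambda,\mu$.

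Next I would evaluate the two elementary weights. From $\mathcal{B}_1(\mu)=\mu^{2}-\tfrac{\mu}{2}+\tfrac18$ and $\mathcal{B}_4(\lambda)=\lambda^{2}-\tfrac{3\lambda}{2}+\tfrac58$ a short computation gives $\mathcal{B}_1(\tfrac16)=\tfrac1{36}-\tfrac1{12}+\tfrac18=\tfrac5{72}$ and $\mathcal{B}_4(\tfrac56)=\tfrac{25}{36}-\tfrac54+\tfrac58=\tfrac5{72}$. Since these two quantities coincide, the factors $\mathcal{B}_1^{1-1/q}(\mu)$ and $\mathcal{B}_4^{1-1/q}(\lambda)$ in (\ref{MIE1}) are both equal to $\left(\tfrac5{72}\right)^{1-1/q}$ and can be pulled out in front of the brace, which produces exactly the constant $ab(b-a)\left(\tfrac5{72}\right)^{1-1/q}$ in the statement. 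On the left-hand side, substituting $\lambda=\tfrac56$, $\mu=\tfrac16$ and collecting terms as in the two preceding corollaries turns the coefficients $(\lambda-\mu,\,1-\lambda,\,\mu)$ into the classical Simpson weights $(\tfrac23,\,\tfrac16,\,\tfrac16)$, so that the left-hand side of (\ref{MIE1}), namely $I_f(\tfrac56,\tfrac16,a,b)$, takes the form $\tfrac13\!\left[\tfrac{f(a)+f(b)}{2}+2f\!\left(\tfrac{2ab}{a+b}\right)\right]-\tfrac{ab}{b-a}\int_a^b\tfrac{f(u)}{u^{2}}\,du$ displayed in the statement. The remaining terms on the right are just the values $\mathcal{B}_2(\tfrac16,q,a,b)$, $\mathcal{B}_3(\tfrac16,q,a,b)$, $\mathcal{B}_5(\tfrac56,q,a,b)$, $\mathcal{B}_6(\tfrac56,q,a,b)$ read directly off Theorem \ref{MT1}, so nothing further has to be computed.

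There is no genuine obstacle at the level of this corollary: all the analytic content is already packaged in Theorem \ref{MT1}, whose proof combines Lemma \ref{IL1}, the power-mean (H\"older) inequality applied separately to the two pieces of the identity, the harmonic $(s,m)$-convexity bound $|f'(ab/A_t)|^{q}\le t^{s}|f'(a)|^{q}+m(1-t)^{s}|f'(b/m)|^{q}$, and the reduction of the weighted integrals $\int|\,\cdot\, -t|\,t^{s}A_t^{-2q}\,dt$ and $\int|\,\cdot\, -t|(1-t)^{s}A_t^{-2q}\,dt$ to Beta and Gauss hypergeometric values. The only points that require care are the arithmetic identity $\mathcal{B}_1(\tfrac16)=\mathcal{B}_4(\tfrac56)=\tfrac5{72}$, which is precisely what lets the two power-mean factors collapse into a single scalar, and the elementary remark that $(\lambda,\mu)=(\tfrac56,\tfrac16)$ is the admissible pair producing the Simpson coefficients $(\tfrac16,\tfrac23,\tfrac16)$.
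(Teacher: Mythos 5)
Your proposal is correct and coincides with the paper's (implicit) argument: the corollary is just Theorem \ref{MT1} specialized to $\mu=\frac{1}{6}$, $\lambda=\frac{5}{6}$, using the admissibility $0\le\frac16\le\frac12\le\frac56\le1$ and the computation $\mathcal{B}_{1}(\tfrac16)=\mathcal{B}_{4}(\tfrac56)=\tfrac{5}{72}$, which lets the single factor $\left(\tfrac{5}{72}\right)^{1-\frac1q}$ be pulled out. The only caveat is that identifying $I_{f}(\tfrac56,\tfrac16,a,b)$ with the Simpson-type expression on the left requires the harmonic-mean form of the identity, i.e. the term $f\!\left(\tfrac{2ab}{a+b}\right)$ and the factor $\tfrac{ab}{b-a}$ in front of the integral, which is what Lemma \ref{IL1} actually produces (the displayed definition of $I_{f}$ in the paper, with $f\!\left(\tfrac{a+b}{2}\right)$ and $\tfrac{2ab}{b-a}$, contains a typo), and your reading agrees with the corrected version used in all the corollaries.
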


\begin{theorem}\label{MT2}
Let $f:I \subset (0,\infty) \rightarrow \mathbb{R}$ be a differentiable function on $I^{\circ}$ such that $f' \in L[a,b]$, where $a,\frac{b}{m} \in I^{\circ}$ with $a < b$. If $|f'|^{q}$ is harmonically $(s,m)$-convex on $[a,\frac{b}{m}]$ for some fixed $q > 1$ and $ 0 \leq \mu \leq \frac{1}{2} \leq \lambda \leq 1$, then following inequality holds
\begin{eqnarray*}\label{MIE2}
\vast|I_{f}(\lambda,\mu,a,b)\vast| &\leq&  ab(b - a)\vast \{\mathcal{B}_{7}^{\frac{1}{p}}(\mu)\vast (|f'(a)|^{q} \mathcal{B}_{8}(\mu,q,a,b) + m|f'(\frac{b}{m})|^{q} \mathcal{B}_{9}(\mu,q,a,b) \vast)^{\frac{1}{q}}\\ &+& \mathcal{B}_{10}^{\frac{1}{p}}(\lambda) \vast (|f'(a)|^{q} \mathcal{B}_{11}(\lambda,q,a,b) + m|f'(\frac{b}{m})|^{q} \mathcal{B}_{12}(\lambda,q,a,b) \vast)^{\frac{1}{q}} \vast\}
\end{eqnarray*}
where
$$
\mathcal{B}_{7}(\mu) = \frac{1}{p + 1} \vast[ \mu^{p + 1} + \vast( \frac{1}{2} - \mu \vast)^{p + 1} \vast] ,$$
$$\mathcal{B}_{10}(\lambda) + \frac{1}{p + 1} \vast[ (\lambda - \frac{1}{2})^{p + 1} + \vast( 1 - \lambda \vast)^{p + 1} \vast], $$
$$\mathcal{B}_{8}(\mu,q,a,b) = \frac{2^{2q - s -2}\beta(1,s + 1)}{(b + a)^{2q}}.{}_{2}F_{1}(2q,1,s + 2,1 - \frac{2a}{b + a})$$
$$\mathcal{B}_{9}(\mu,q,a,b) = \frac{\beta(s + 1,1)}{b^{2q}}.{}_{2}F_{1}(2q,s + 1,s + 2,1 - \frac{a}{b}) - \frac{\beta(s + 1, 2)}{2^{s + 1}b^{2q}}.{}_{2}F_{1}(2q,s + 1,s + 2,1 - \frac{a + b}{2b })$$
$$\mathcal{B}_{11}(\lambda,q,a,b) = \frac{\beta(1,s + 1)}{b^{2q}}.{}_{2}F_{1}(2q,1,s + 2,1 - \frac{a}{b}) - \frac{2^{2q - s - 1}\beta(1,s + 1)}{(b + a)^{2q}}.{}_{2}F_{1}(2q,1,s + 2,1 - \frac{2a}{b + a })$$

$$\mathcal{B}_{12}(\mu,q,a,b) =  \frac{\beta(s + 1, 2)}{2^{s + 1}b^{2q}}.{}_{2}F_{1}(2q,s + 1,s + 3,1 - \frac{a + b}{2b })$$
\end{theorem}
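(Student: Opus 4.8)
The plan is to follow the same route as the proof of Theorem~\ref{MT1}, but to replace the power-mean inequality by H\"older's inequality with conjugate exponents $p=q/(q-1)$ and $q$, which is legitimate precisely because $q>1$. Starting from Lemma~\ref{IL1} and moving the absolute value inside the two integrals gives
\[
|I_f(\lambda,\mu,a,b)| \le ab(b-a)\left\{\int_0^{1/2}\frac{|\mu-t|}{A_t^2}\left|f'\left(\frac{ab}{A_t}\right)\right|dt + \int_{1/2}^1\frac{|\lambda-t|}{A_t^2}\left|f'\left(\frac{ab}{A_t}\right)\right|dt\right\}.
\]
To each of the two integrals one applies H\"older's inequality, this time assigning the factor $|\mu-t|$ (respectively $|\lambda-t|$) entirely to the $L^p$-factor and the factor $A_t^{-2}|f'(ab/A_t)|$ to the $L^q$-factor. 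This produces $\mathcal{B}_7^{1/p}(\mu)\big(\int_0^{1/2}A_t^{-2q}|f'(ab/A_t)|^q\,dt\big)^{1/q}$ together with the analogous term carrying $\mathcal{B}_{10}^{1/p}(\lambda)$ over $[\tfrac12,1]$.

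Next, since $\tfrac{ab}{A_t}=\big(\tfrac{t}{a}+\tfrac{1-t}{m\,(b/m)}\big)^{-1}$ lies in $[a,b]\subseteq[a,b/m]$, the harmonic $(s,m)$-convexity of $|f'|^q$ on $[a,b/m]$ yields the pointwise bound $|f'(ab/A_t)|^q\le t^s|f'(a)|^q+m(1-t)^s|f'(b/m)|^q$. Inserting this into the two $L^q$-integrals splits each of them as $|f'(a)|^q\int t^sA_t^{-2q}\,dt + m|f'(b/m)|^q\int(1-t)^sA_t^{-2q}\,dt$ over $[0,\tfrac12]$ and over $[\tfrac12,1]$; these four integrals are exactly $\mathcal{B}_8,\mathcal{B}_9$ and $\mathcal{B}_{11},\mathcal{B}_{12}$, so after collecting terms the asserted inequality follows.

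It remains only to evaluate the six constants in closed form. The $L^p$-integrals $\int_0^{1/2}|\mu-t|^p\,dt$ and $\int_{1/2}^1|\lambda-t|^p\,dt$ are computed by splitting the absolute value at $t=\mu$ and at $t=\lambda$; the hypotheses $0\le\mu\le\tfrac12$ and $\tfrac12\le\lambda\le1$ guarantee the splitting point lies in the relevant interval, giving $\mathcal{B}_7(\mu)$ and $\mathcal{B}_{10}(\lambda)$. For the four rational integrals one writes $A_t=tb+(1-t)a$, rescales the interval to $[0,1]$ via $t=u/2$ (and for the integrals over $[\tfrac12,1]$ uses $\int_{1/2}^1=\int_0^1-\int_0^{1/2}$ after the substitution $v=1-t$), and then recognises each resulting integral as a Beta function times a Gauss hypergeometric function through the Euler representation $\int_0^1u^{\beta-1}(1-u)^{\gamma-\beta-1}(1-zu)^{-\alpha}du=B(\beta,\gamma-\beta)\,{}_2F_1(\alpha,\beta;\gamma;z)$. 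In each case the argument $z$ comes out to be $1-\tfrac{2a}{a+b}$, $1-\tfrac{a}{b}$ or $1-\tfrac{a+b}{2b}$, all of modulus $<1$ since $0<a<b$, so the representation is valid.

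The only real difficulty, as in Theorem~\ref{MT1}, is this last bookkeeping step: correctly matching each integral $\int t^sA_t^{-2q}dt$ and $\int(1-t)^sA_t^{-2q}dt$ to the right combination of $\beta$-values and ${}_2F_1$-terms, keeping careful track of the powers of $2$ generated by the rescaling $t=u/2$ and of the additional hypergeometric terms appearing when $[\tfrac12,1]$ is rewritten as $[0,1]\setminus[0,\tfrac12]$. No conceptual obstacle arises beyond this computation.
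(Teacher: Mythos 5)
Your proposal is correct and follows essentially the same route as the paper's own proof: Lemma \ref{IL1}, H\"older's inequality with exponents $p=q/(q-1)$ and $q$ (the weight $|\mu-t|$, resp. $|\lambda-t|$, placed entirely in the $L^p$ factor), the harmonic $(s,m)$-convexity bound $|f'(ab/A_t)|^q\le t^s|f'(a)|^q+m(1-t)^s|f'(b/m)|^q$, and evaluation of the resulting moment integrals via the Euler Beta/${}_2F_1$ representation. Your explicit justification that $ab/A_t=(t/a+(1-t)/(m\cdot b/m))^{-1}$ falls under the definition of harmonic $(s,m)$-convexity is a detail the paper leaves implicit, but otherwise the two arguments coincide.
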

\begin{proof}
using Lemma \ref{IL1}, H$\ddot{o}$lder's inequality and harmonically $(s,m)$-convexity in second sense of $|f'|^{q}$, we get

\begin{eqnarray*}
\vast|I_{f}(\lambda,\mu,a,b)\vast| &\leq & ab(b - a)\vast\{ \vast( \int^{\frac{1}{2}}_{0} |\mu - t|^{p}dt \vast)^{\frac{1}{p}} \vast (\int^{\frac{1}{2}}_{0} \frac{1}{A_{t}^{2q}} \big|f'\vast( \frac{ab}{A_{t}} \vast)\big|^{q}   \vast)^{\frac{1}{q}}\\
&+& \vast( \int^{1}_{\frac{1}{2}} |\lambda - t|^{p}dt \vast)^{\frac{1}{p}} \vast (\int^{1}_{\frac{1}{2}} \frac{1}{A_{t}^{2q}} \big|f'\vast( \frac{ab}{A_{t}} \vast)\big|^{q}   \vast)^{\frac{1}{q}} \vast\}\\
&\leq& ab(b - a)\vast\{ \vast( \int^{\frac{1}{2}}_{0} |\mu - t|^{p}dt \vast)^{\frac{1}{p}}\\
&\times& \vast (|f'(a)|^{q} \int^{\frac{1}{2}}_{0} \frac{t^{s}}{A_{t}^{2q}}dt + m|f'(\frac{b}{m})|^{q} \int^{\frac{1}{2}}_{0} \frac{ (1 - t)^{s}}{A_{t}^{2q}} \vast)^{\frac{1}{q}}dt\\
&+& \vast( \int^{1}_{\frac{1}{2}} |\lambda - t|^{p}dt \vast)^{\frac{1}{p}} \\
&\times& \vast (|f'(a)|^{q} \int^{1}_{\frac{1}{2}} \frac{t^{s}}{A_{t}^{2q}}dt + m|f'(\frac{b}{m})|^{q} \int^{1}_{\frac{1}{2}} \frac{ (1 - t)^{s}}{A_{t}^{2q}} \vast)^{\frac{1}{q}},\\
\end{eqnarray*}
where, by calculations we find that
$$\vast( \int^{\frac{1}{2}}_{0} |\mu - t|^{p}dt \vast)^{\frac{1}{p}} = \frac{1}{p + 1} \vast[ \mu^{p + 1} + \vast( \frac{1}{2} - \mu \vast)^{p + 1} \vast]$$
$$ \vast( \int^{1}_{\frac{1}{2}} |\lambda - t|^{p}dt \vast)^{\frac{1}{p}} = \frac{1}{p + 1} \vast[ (\lambda - \frac{1}{2})^{p + 1} + \vast( 1 - \lambda \vast)^{p + 1} \vast] $$
$$ \int^{\frac{1}{2}}_{0} \frac{t^{s}}{A_{t}^{2q}}dt = \frac{2^{2q - s -2}\beta(1,s + 1)}{(b + a)^{2q}}.{}_{2}F_{1}(2q,1,s + 2,1 - \frac{2a}{b + a})$$
$$\int^{\frac{1}{2}}_{0} \frac{ (1 - t)^{s}}{A_{t}^{2q}} dt = \frac{\beta(s + 1,1)}{b^{2q}}.{}_{2}F_{1}(2q,s + 1,s + 2,1 - \frac{a}{b}) - \frac{\beta(s + 1, 2)}{2^{s + 1}b^{2q}}.{}_{2}F_{1}(2q,s + 1,s + 2,1 - \frac{a + b}{2b })  $$
$$ \int^{1}_{\frac{1}{2}} \frac{t^{s}}{A_{t}^{2q}}dt = \frac{\beta(1,s + 1)}{b^{2q}}.{}_{2}F_{1}(2q,1,s + 2,1 - \frac{a}{b}) - \frac{2^{2q - s - 1}\beta(1,s + 1)}{(b + a)^{2q}}.{}_{2}F_{1}(2q,1,s + 2,1 - \frac{2a}{b + a })$$
$$ \int^{1}_{\frac{1}{2}} \frac{ (1 - t)^{s}}{A_{t}^{2q}} =  \frac{\beta(s + 1, 2)}{2^{s + 1}b^{2q}}.{}_{2}F_{1}(2q,s + 1,s + 3,1 - \frac{a + b}{2b }) $$
which completes the proof.
\end{proof}
\begin{corollary}
Under the assumption of Theorem \ref{MT2} with $ \lambda = \mu = \frac{1}{2}$, the inequality (\ref{MIE2}) reduced to to the following inequality
\begin{eqnarray*}
\vast| \frac{f(a) + f(b)}{2} - \frac{ab}{b - a}\int_{a}^{b} \frac{f(u)}{u^{2}}du \vast| &\leq& ab(b - a) \vast( \frac{1}{(p + 1)2^{p + 1}} \vast)^{\frac{1}{p}} \vast \{\vast (|f'(a)|^{q} \mathcal{B}_{8}(q,a,b) + m|f'(\frac{b}{m})|^{q} \mathcal{B}_{9}(q,a,b) \vast)^{\frac{1}{q}}\\ &+&  \vast (|f'(a)|^{q} \mathcal{B}_{11}(q,a,b) + m|f'(\frac{b}{m})|^{q} \mathcal{B}_{12}(q,a,b) \vast)^{\frac{1}{q}} \vast\}
\end{eqnarray*}
\end{corollary}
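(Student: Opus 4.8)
The plan is to obtain this corollary as the direct specialisation of Theorem \ref{MT2} to $\lambda=\mu=\tfrac12$; no new estimate is required, only an evaluation of the constants at this particular point. First I would simplify the left-hand side: by the very definition of $I_{f}(\lambda,\mu,a,b)$, setting $\lambda=\mu=\tfrac12$ annihilates the coefficient $\lambda-\mu$ of $f\big(\tfrac{a+b}{2}\big)$, while $(1-\tfrac12)f(a)+\tfrac12 f(b)=\tfrac{f(a)+f(b)}{2}$; hence $I_{f}(\tfrac12,\tfrac12,a,b)$ is precisely the quantity sitting inside the modulus on the left of the asserted inequality.

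Next I would evaluate the two H\"older prefactors at $\tfrac12$. From $\mathcal{B}_{7}(\mu)=\tfrac{1}{p+1}\big[\mu^{p+1}+\big(\tfrac12-\mu\big)^{p+1}\big]$ one gets $\mathcal{B}_{7}\big(\tfrac12\big)=\tfrac{1}{p+1}\big[\big(\tfrac12\big)^{p+1}+0\big]=\tfrac{1}{(p+1)2^{p+1}}$, and symmetrically, from $\mathcal{B}_{10}(\lambda)=\tfrac{1}{p+1}\big[\big(\lambda-\tfrac12\big)^{p+1}+(1-\lambda)^{p+1}\big]$ one gets $\mathcal{B}_{10}\big(\tfrac12\big)=\tfrac{1}{(p+1)2^{p+1}}$ as well. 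Therefore $\mathcal{B}_{7}^{1/p}\big(\tfrac12\big)=\mathcal{B}_{10}^{1/p}\big(\tfrac12\big)=\big(\tfrac{1}{(p+1)2^{p+1}}\big)^{1/p}$, and this common factor can be pulled outside the braces.

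Finally I would substitute. The quantities $\mathcal{B}_{8},\mathcal{B}_{9},\mathcal{B}_{11},\mathcal{B}_{12}$ appearing in Theorem \ref{MT2} depend only on $q,a,b$ and not on $\mu$ or $\lambda$, so at $\lambda=\mu=\tfrac12$ they are left unchanged and are written $\mathcal{B}_{8}(q,a,b),\dots,\mathcal{B}_{12}(q,a,b)$. Plugging the simplified left-hand side, together with $\mathcal{B}_{7}^{1/p}\big(\tfrac12\big)=\mathcal{B}_{10}^{1/p}\big(\tfrac12\big)=\big(\tfrac{1}{(p+1)2^{p+1}}\big)^{1/p}$, into the inequality of Theorem \ref{MT2} yields exactly the claimed bound.

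I do not expect a genuine obstacle here: the argument is a pure substitution into an inequality already established. The only points that warrant a moment's care are confirming that $\mathcal{B}_{8},\mathcal{B}_{9},\mathcal{B}_{11},\mathcal{B}_{12}$ really carry no $\mu$- or $\lambda$-dependence, so that the whole evaluation reduces to the elementary computation of $\mathcal{B}_{7}\big(\tfrac12\big)$ and $\mathcal{B}_{10}\big(\tfrac12\big)$, and correctly keeping the $\tfrac1p$ exponent on the two H\"older factors.
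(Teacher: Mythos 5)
Your proposal is correct and is exactly the route the paper intends: the corollary is simply Theorem \ref{MT2} specialised to $\lambda=\mu=\tfrac12$, using $\mathcal{B}_{7}\big(\tfrac12\big)=\mathcal{B}_{10}\big(\tfrac12\big)=\tfrac{1}{(p+1)2^{p+1}}$ and the fact that $\mathcal{B}_{8},\mathcal{B}_{9},\mathcal{B}_{11},\mathcal{B}_{12}$ carry no $\mu$- or $\lambda$-dependence. One small caveat: with the paper's displayed definition of $I_{f}$ (which has $(\lambda-\mu)f\big(\tfrac{a+b}{2}\big)$ and $\tfrac{2ab}{b-a}\int_{a}^{b}\frac{f(u)}{u^{2}}\,du$) the left-hand side would differ from the corollary by a factor $2$ on the integral; matching it exactly requires the evidently intended form $(\lambda-\mu)f\big(\tfrac{2ab}{a+b}\big)+(1-\lambda)f(a)+\mu f(b)-\tfrac{ab}{b-a}\int_{a}^{b}\frac{f(u)}{u^{2}}\,du$, a typographical issue that does not affect your argument.
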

\begin{corollary}
Under the assumption of Theorem \ref{MT2} with $ \lambda = 1 $ and $ \mu = 0$, the inequality (\ref{MIE2}) reduced to to the following inequality
\begin{eqnarray*}
\vast| f ( \frac{2ab}{a + b} ) - \frac{ab}{b - a}\int_{a}^{b} \frac{f(u)}{u^{2}}du \vast|  &\leq& ab(b - a) \vast( \frac{1}{(p + 1)2^{p + 1}} \vast)^{\frac{1}{p}} \vast \{\vast (|f'(a)|^{q} \mathcal{B}_{8}(q,a,b) + m|f'(\frac{b}{m})|^{q} \mathcal{B}_{9}(q,a,b) \vast)^{\frac{1}{q}}\\ &+&  \vast (|f'(a)|^{q} \mathcal{B}_{11}(q,a,b) + m|f'(\frac{b}{m})|^{q} \mathcal{B}_{12}(q,a,b) \vast)^{\frac{1}{q}} \vast\}
\end{eqnarray*}
\end{corollary}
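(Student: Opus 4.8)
The plan is to obtain this corollary as a direct endpoint specialization of Theorem \ref{MT2}, since no new integral estimate is needed. First I would check that the choice $\mu = 0$, $\lambda = 1$ is admissible: these values satisfy the standing hypothesis $0 \leq \mu \leq \frac{1}{2} \leq \lambda \leq 1$ of Theorem \ref{MT2} (the bounds are non-strict), so the inequality (\ref{MIE2}) holds verbatim for them and I may simply substitute $\mu = 0$ and $\lambda = 1$ everywhere, with $p$ the H\"older conjugate of $q > 1$.

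Next I would evaluate the two outer constants at these endpoints. From $\mathcal{B}_{7}(\mu) = \frac{1}{p+1}\big[\mu^{p+1} + (\frac{1}{2} - \mu)^{p+1}\big]$ one gets $\mathcal{B}_{7}(0) = \frac{1}{p+1}\big[0 + (\frac{1}{2})^{p+1}\big] = \frac{1}{(p+1)2^{p+1}}$, and from $\mathcal{B}_{10}(\lambda) = \frac{1}{p+1}\big[(\lambda - \frac{1}{2})^{p+1} + (1-\lambda)^{p+1}\big]$ one gets $\mathcal{B}_{10}(1) = \frac{1}{p+1}\big[(\frac{1}{2})^{p+1} + 0\big] = \frac{1}{(p+1)2^{p+1}}$. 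The key observation is that these two values coincide, so the prefactors $\mathcal{B}_{7}^{1/p}(0)$ and $\mathcal{B}_{10}^{1/p}(1)$ reduce to the single constant $\big(\frac{1}{(p+1)2^{p+1}}\big)^{1/p}$, which can then be extracted in front of the brace exactly as displayed in the statement.

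I would then record that the four inner constants are left untouched by the substitution: in the H\"older argument of Theorem \ref{MT2} the weights $|\mu - t|$ and $|\lambda - t|$ were absorbed entirely into the first factors producing $\mathcal{B}_{7}$ and $\mathcal{B}_{10}$, so that $\mathcal{B}_{8}, \mathcal{B}_{9}, \mathcal{B}_{11}, \mathcal{B}_{12}$ carry no genuine dependence on $\mu$ or $\lambda$ and are merely relabelled with the argument $(q,a,b)$. Finally I would collapse the left-hand side: substituting $\lambda = 1$, $\mu = 0$ into $I_{f}(\lambda,\mu,a,b)$ makes the coefficient $1-\lambda$ of $f(a)$ and the coefficient $\mu$ of $f(b)$ both vanish, while $\lambda - \mu = 1$, so that $I_{f}(1,0,a,b)$ reduces to $f$ evaluated at the harmonic mean, $f\big(\frac{2ab}{a+b}\big)$, minus the integral-mean term, which is precisely the expression on the left of the asserted inequality.

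There is no analytic obstacle, since every hypergeometric integral was already evaluated in Theorem \ref{MT2}; the work is purely clerical. The one point that demands care is the constant-collapse, namely verifying that $\mathcal{B}_{7}(0)$ and $\mathcal{B}_{10}(1)$ are genuinely equal so that a common factor may be pulled out front, together with the bookkeeping confirming that the $f(a)$ and $f(b)$ contributions cancel exactly at the chosen endpoints.
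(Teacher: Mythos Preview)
Your proposal is correct and is exactly the paper's approach: the corollary carries no separate proof in the paper and is obtained by the direct substitution $\lambda=1$, $\mu=0$ into Theorem \ref{MT2}, with the collapse $\mathcal{B}_{7}(0)=\mathcal{B}_{10}(1)=\tfrac{1}{(p+1)2^{p+1}}$ and the reduction of $I_{f}(1,0,a,b)$ on the left exactly as you describe. The only caution is cosmetic: the paper's displayed definition of $I_{f}$ writes the midpoint as $\frac{a+b}{2}$, but, consistently with all the corollaries (and with the underlying identity of Lemma \ref{IL1}), the intended value is the harmonic mean $\frac{2ab}{a+b}$, which is the reading you have used.
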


\begin{corollary}
Under the assumption of Theorem \ref{MT2} with $ \lambda = \frac{5}{6} $ and $\mu = \frac{1}{6}$, the inequality (\ref{MIE2}) reduced to to the following inequality
$$
\vast|\frac{1}{3} \vast[\frac{f(a) + f(b)}{2} + 2 f\vast(  \frac{2ab}{a + b} \vast) \vast] - \frac{ab}{b - a} \int_{a}^{b}\frac{f(u)}{u^{2}}du \vast|$$
$$\leq  ab(b - a)\vast( \frac{(2^{p + 1})}{(p + 1)6^{p + 1}} \vast)^{\frac{1}{p}}\vast \{
\vast (|f'(a)|^{q} \mathcal{B}_{2}(,q,a,b) + m|f'(\frac{b}{m})|^{q} \mathcal{B}_{3}(q,a,b) \vast)^{\frac{1}{q}}$$
$$+ \vast (|f'(a)|^{q} \mathcal{B}_{5}(q,a,b) + m|f'(\frac{b}{m})|^{q} \mathcal{B}_{6}(q,a,b) \vast) \vast\}$$
\end{corollary}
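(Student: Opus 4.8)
The plan is to read off the asserted inequality directly from Theorem \ref{MT2} by evaluating its bound at the Simpson nodes $\mu=\tfrac16$ and $\lambda=\tfrac56$; no new integral has to be computed. The structural point that makes this immediate is that in Theorem \ref{MT2} the four constants $\mathcal{B}_{8},\mathcal{B}_{9},\mathcal{B}_{11},\mathcal{B}_{12}$ are the fixed integrals $\int_{0}^{1/2}t^{s}A_{t}^{-2q}\,dt$, $\int_{0}^{1/2}(1-t)^{s}A_{t}^{-2q}\,dt$, $\int_{1/2}^{1}t^{s}A_{t}^{-2q}\,dt$ and $\int_{1/2}^{1}(1-t)^{s}A_{t}^{-2q}\,dt$, the weights $|\mu-t|$ and $|\lambda-t|$ having already been split off into the H\"older $p$-factors $\mathcal{B}_{7}(\mu),\mathcal{B}_{10}(\lambda)$. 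Hence $\mathcal{B}_{8},\mathcal{B}_{9},\mathcal{B}_{11},\mathcal{B}_{12}$ carry no dependence on $\mu$ or $\lambda$ and pass into the corollary unchanged: they are exactly the four constants appearing on the right-hand side of the statement (the first summand weighting $|f'(a)|^{q}$ by $\mathcal{B}_{8}$ and $m|f'(\tfrac{b}{m})|^{q}$ by $\mathcal{B}_{9}$, the second by $\mathcal{B}_{11}$ and $\mathcal{B}_{12}$). Only the scalars $\mathcal{B}_{7},\mathcal{B}_{10}$ and the functional $I_{f}$ respond to the substitution.

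First I would evaluate the two H\"older coefficients. From $\mathcal{B}_{7}(\mu)=\tfrac{1}{p+1}\big[\mu^{p+1}+(\tfrac12-\mu)^{p+1}\big]$ and $\mathcal{B}_{10}(\lambda)=\tfrac{1}{p+1}\big[(\lambda-\tfrac12)^{p+1}+(1-\lambda)^{p+1}\big]$, putting $\mu=\tfrac16$ and $\lambda=\tfrac56$ gives $\mathcal{B}_{7}(\tfrac16)=\tfrac{1}{p+1}\big[(\tfrac16)^{p+1}+(\tfrac13)^{p+1}\big]$ and $\mathcal{B}_{10}(\tfrac56)=\tfrac{1}{p+1}\big[(\tfrac13)^{p+1}+(\tfrac16)^{p+1}\big]$. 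Because the nodes $\tfrac16$ and $\tfrac56$ are symmetric about $\tfrac12$, these expressions coincide, and writing $\tfrac13=\tfrac{2}{6}$ reduces the common value to $\tfrac{1+2^{p+1}}{(p+1)\,6^{p+1}}$. It is this coincidence $\mathcal{B}_{7}^{1/p}(\tfrac16)=\mathcal{B}_{10}^{1/p}(\tfrac56)$ that allows a single prefactor $\big(\tfrac{1+2^{p+1}}{(p+1)6^{p+1}}\big)^{1/p}$ to be pulled outside the braces, which is the constant displayed in the statement.

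Finally I would identify the left-hand side by substituting the same values into the functional $I_{f}(\lambda,\mu,a,b)=(\lambda-\mu)f\!\big(\tfrac{2ab}{a+b}\big)+(1-\lambda)f(a)+\mu f(b)-\tfrac{ab}{b-a}\int_{a}^{b}\tfrac{f(u)}{u^{2}}\,du$. At $\lambda=\tfrac56,\mu=\tfrac16$ the weights become $\lambda-\mu=\tfrac23$ and $1-\lambda=\mu=\tfrac16$, so $I_{f}(\tfrac56,\tfrac16,a,b)=\tfrac23 f\!\big(\tfrac{2ab}{a+b}\big)+\tfrac16\big(f(a)+f(b)\big)-\tfrac{ab}{b-a}\int_{a}^{b}\tfrac{f(u)}{u^{2}}\,du$, and factoring $\tfrac13$ regroups this as $\tfrac13\big[\tfrac{f(a)+f(b)}{2}+2f\!\big(\tfrac{2ab}{a+b}\big)\big]-\tfrac{ab}{b-a}\int_{a}^{b}\tfrac{f(u)}{u^{2}}\,du$, exactly the Simpson expression on the left of the claim. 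Combining the common prefactor with the two unchanged $q$-power brackets (each carrying exponent $\tfrac1q$) in the bound of Theorem \ref{MT2} then gives the assertion. There is no genuine analytic obstacle once Theorem \ref{MT2} is available; the only steps needing care are verifying that the node symmetry forces $\mathcal{B}_{7}(\tfrac16)=\mathcal{B}_{10}(\tfrac56)$ so that one prefactor suffices, and checking that the weights $(\tfrac23,\tfrac16,\tfrac16)$ regroup into the Simpson combination.
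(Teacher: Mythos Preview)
Your approach is exactly the paper's: the corollary is obtained by direct substitution of $\lambda=\tfrac56$, $\mu=\tfrac16$ into Theorem~\ref{MT2}, and your identification of the left-hand side and of the node symmetry $\mathcal{B}_{7}(\tfrac16)=\mathcal{B}_{10}(\tfrac56)$ is correct. One caution: the value you (correctly) compute, $\tfrac{1+2^{p+1}}{(p+1)6^{p+1}}$, is \emph{not} literally the constant $\tfrac{2^{p+1}}{(p+1)6^{p+1}}$ printed in the statement---the printed statement drops the ``$1+$'' and also writes $\mathcal{B}_{2},\mathcal{B}_{3},\mathcal{B}_{5},\mathcal{B}_{6}$ where $\mathcal{B}_{8},\mathcal{B}_{9},\mathcal{B}_{11},\mathcal{B}_{12}$ are meant; these are typographical slips in the paper, not errors in your derivation, but your sentence ``which is the constant displayed in the statement'' should be amended accordingly.
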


\end{document}